\newtheorem{theo}{Theorem}
\newtheorem{lem}[theo]{Lemma}
\newtheorem{cor}[theo]{Corollary}
\newtheorem{rem}[theo]{Remark}
\theoremstyle{definition}
\newtheorem*{nota}{Notation}
\newtheorem{ex}[theo]{Example}
\newtheorem*{defi}{Definition}
\newtheorem*{con}{Convention}
\newcommand{\stbn}{\{e_1,\ldots,e_n\}}
\newcommand{\stbm}{\{e_1,\ldots,e_m\}}
\newcommand{\glnr}{\mathrm{GL}_n(\mathbb{R})}
\newcommand{\glnz}{\mathrm{GL}_n(\mathbb{Z})}
\newcommand{\onz}{\mathrm{O}_n(\mathbb{Z})}
\newcommand{\onr}{\mathrm{O}_n(\mathbb{R})}
\newcommand{\setPermMatn}{\mathcal{P}_n}
\newcommand{\signchangeMatr}{\mathcal{D}_{\text{sign}}}
\newcommand{\zn}{\mathbb{Z}^n}
\newcommand{\rn}{\mathbb{R}^n}
\newcommand{\rnpl}{\mathbb{R}_{\geq0}^n}
\newcommand{\rkpl}{\mathbb{R}_{\geq0}^k}
\newcommand{\rk}{\mathbb{R}^k}
\newcommand{\xstar}{x^{\ast}\hspace{-0.05cm}}
\newcommand{\ystar}{y^{\ast}\hspace{-0.05cm}}
\newcommand{\xstarfix}{x^{\ast}_{\mathrm{fix}}}
\newcommand{\XI}{X_I}
\newcommand{\xfix}{x_{\mathrm{fix}}}
\newcommand{\FixgY}{\mathrm{Fix}_{g}(Y)}
\newcommand{\FixGY}{\mathrm{Fix}_{G}(Y)}
\newcommand{\FixGR}{\mathrm{Fix}_{G}(\rn)}
\newcommand{\FixGV}{\mathrm{Fix}_{G}(V)}
\newcommand{\FixGVi}{\mathrm{Fix}_{G}(V_i)}
\newcommand{\FixgR}{\mathrm{Fix}_{g}(\rn)}
\newcommand{\Eigg}{\mathrm{Eig}_1(g)}
\newcommand{\subsVi}{\hspace{-0.1cm}\widehat{\phantom{.}V_i}}
\newcommand{\setVarXO}{\mathfrak{X}_O}
\newcommand{\setVarXOi}{\mathfrak{X}_{O_i}}
\newcommand{\SubLP}{\mathrm{Sub}(\Lambda)}
\newcommand{\RetLP}{\mathrm{Ret}(\SubLP)}
\newcommand{\Sn}[1]{\mathrm{S}_{#1}}
\begin{document}
\title{Symmetries in Linear and Integer Programs}
\author{Katrin Herr}
\address{Institut f\"ur Mathematik, MA 6-2\\
TU Berlin\\
10623 Berlin\\
Germany}
\email{herr@math.tu-berlin.de}  
\author{Richard B\"odi}
\address{IBM Zurich Research Laboratory\\
CH-8803 R\"uschlikon\\
Switzerland}
\email{rbo@zurich.ibm.com} 

\keywords{symmetry, symmetry group, orbit, linear program, integer program}
\date{\today}

\begin{abstract}
The notion of symmetry is defined in the context of Linear and Integer Programming. Symmetric linear and integer programs are studied from a group theoretical viewpoint. We show that for any linear program there exists an optimal solution in the fixed point set of its symmetry group. Using this result, we develop an algorithm that allows for reducing the dimension of any linear program having a non-trivial group of symmetries.
\end{abstract}

\maketitle
\section{Introduction}
\label{intro}

Order, beauty and perfection -- these are the words we typically associate with symmetry.
Generally, we expect the structure of objects with many symmetries to be uniform and regular, thus
not too complicated. Therefore, symmetries usually are very welcomed in many scientific areas,
especially in mathematics. However, in integer programming, the reverse seems to be true. In
practice, highly symmetric integer programs often turn out to be particularly hard to solve. The
problem is that branch-and-bound or branch-and-cut algorithms, which are commonly used to solve
integer programs, work efficiently only if the bulk of the branches of the search tree can be
pruned. Since symmetry in integer programs usually entails many equivalent solutions, the branches
belonging to these solutions cannot be pruned, which leads to a very poor performance of the
algorithm.\\

Only in the last few years first efforts were made to tackle this irritating problem. In 2002,
Margot presented an algorithm that cuts feasible integer points without changing the optimal value
of the problem, compare~\cite{margot1}. Improvements and generalizations of this basic idea can be
found in~\cite{margot2,margot3}. In~\cite{linderoth1,linderoth2}, Linderoth et al. concentrate on
improving branching methods for packing and covering integer problems by using information about
the symmetries of the integer programs. Another interesting approach to these kind of problems has
been developed by Kaibel and Pfetsch. In~\cite{kaibel1}, the authors introduce special polyhedra,
called orbitopes, which they use in~\cite{kaibel2} to remove redundant branches of the search tree.
Friedman's fundamental domains in~\cite{friedman} are also aimed at avoiding the evaluation of
redundant solutions. For selected integer programs like generalized bin-packing problems there
exists a completely different idea how to deal with symmetries, see e.g.~\cite{fekete}. Instead of
eliminating the effects of symmetry during the branch-and-bound process, the authors exclude
symmetry already in the formulation of the problem by choosing an
appropriate representation for feasible packings.\\

All ideas in the aforementioned papers finally rely on the branch-and-bound algorithm, or they are only
applicable to selected problems. In contrast to this optimizational or specialized point of view,
we want to approach the topic from a more general and algebraic angle and detach ourselves from the
classical optimization methods like branch-and-bound. In this paper we will examine symmetries of linear programs in their natural environment, the field of group theory. Our main objective aims at a better understanding of the role of symmetry in the context of linear and integer programming. In a subsequent paper we will discuss symmetries of integer programs.\\

\section{Preliminaries}
Optimization problems whose solutions must satisfy several constraints are called restricted
optimization problems. If all constraints as well as the objective
function are linear, we call them linear programs, LP for short.\\
The linearity of such problems suggests the following canonical formulation for arbitrary LP
problems.
\begin{equation}\label{LP1_ohne_rnpl}
\begin{split}
&\mathrm{max}\enspace  c^tx\\
&\mathrm{s.t.}\hspace{0.35cm}  Ax\leq b,\enspace x\in\rn\enspace,
\end{split}
\end{equation}
where $A\in \mathbb{R}^{m\times n},\enspace b\in \mathbb{R}^m$ and $c\in \rn\setminus\{0\}$. We are
especially interested in points that are candidates for solutions of an LP.
\begin{defi} A point $x\in\rn$ is \emph{feasible} for an LP if $x$ satisfies all constraints of the LP.
The LP itself and any set of points is \emph{feasible} if it has at least one feasible point.
\end{defi}
Hence, the set of feasible points $X$ of~(\ref{LP1_ohne_rnpl}) is given by
\[X:=\{x\in\rn\,|\,Ax\leq b\}\enspace.\]
\begin{con}
We call $X$ the \emph{feasible region}, $c$ the \emph{utility vector} and $n$ the \emph{dimension}
of~$\Lambda$. The map~$x\mapsto c^tx$ is called the \emph{utility function}, and the value of the
utility function with respect to a specific~$x\in\rn$ is called the \emph{utility value} of~$x$.
\end{con}
We can interpret the feasible region of an LP in a geometric sense. The following definition is
adopted from~\cite{schrijver}, p.~87.
\begin{defi}
A \emph{polyhedron} $P\subseteq \rn$ is the intersection of finitely many affine half-spaces, i.e.,
\[P:=\{x\in\rn\,|\,Ax\leq b\}\enspace,\]
for a matrix $A\in \mathbb{R}^{m\times n}$ and a vector $b\in \mathbb{R}^m$.
\end{defi}
Note that every row of the system~$Ax\leq b$ defines an affine half-space. Obviously, the set~$X$
is a polyhedron. Since every affine half-space is convex, the intersection of affine half-spaces --
hence, any polyhedron -- is convex as well. Therefore, we can now state the convexity of~$X$.
\begin{rem}\label{X_convex}
The feasible region of an LP is convex.
\end{rem}
Whenever we consider linear programs, we are particularly interested in points with maximal utility
values that satisfy all the constraints.
\begin{defi}
A \emph{solution} of an LP is an element $\xstar\in\rn$ that is feasible and maximizes the utility function.
\end{defi}
If we additionally insist on integrality of the solution, we get a so-called integer program, IP
for short. According to the LP formulation in~(\ref{LP1_ohne_rnpl}), the appropriate formulation
for the related IP is given by
\begin{equation}\label{IP1_ohne_rnpl}
\begin{split}
&\mathrm{max}\enspace  c^tx\\
&\mathrm{s.t.}\hspace{0.35cm}  Ax\leq b,\enspace x\in \zn\enspace,
\end{split}
\end{equation}
where $A\in \mathbb{R}^{m\times n}, b\in \mathbb{R}^m$ and $c\in
\rn\setminus\{0\}$.\\
Analogously, the set of feasible points $\XI$ of (\ref{IP1_ohne_rnpl}) is given by
\[\XI:=\{x\in\rn\,|\,Ax\leq b,\,x\in \zn\}=X\cap \zn\enspace.\]
We now want to clarify the meaning of the term symmetry in the context of linear and integer
programming.

\section{Symmetries}\label{subsection_symmetries}
In general, symmetries are automorphisms, that is, operations that map an object to itself in a
bijective way compatible with its structure. Concerning linear and integer programs, we
therefore have to consider operations that preserve both the utility vector and the inequality
system, thus, in particular, the polyhedron which is described by the inequality system. By the
usage of matrix notation, this polyhedron is already embedded in Euclidean space~$\rn$. This is the point
where we have to decide whether we want to regard~$\rn$ as an affine or as a linear space. In
respect of the algorithms we are going to develop, we follow the general
tendency in the literature and choose the linear perspective for the sake of a simpler group
structure. Hence, the operations we consider are automorphisms of the linear space~$\rn$, that is,
elements of the general linear group~$\glnr$. Furthermore, it is reasonable to restrict the set of
possible symmetries even to isometries taking into account that the angles and the lengths of the
edges of the polyhedron need to be preserved. Since the set of all automorphisms of an object
always is a group, we therefore suggest that the symmetries of a linear or an integer program form
a subgroup of the orthogonal group~$\onr$.\\
In general, a linear program and the associated integer program need not have the same symmetries.
The following two examples illustrate this fact. 

\begin{figure}[htp]
\centering
 \includegraphics{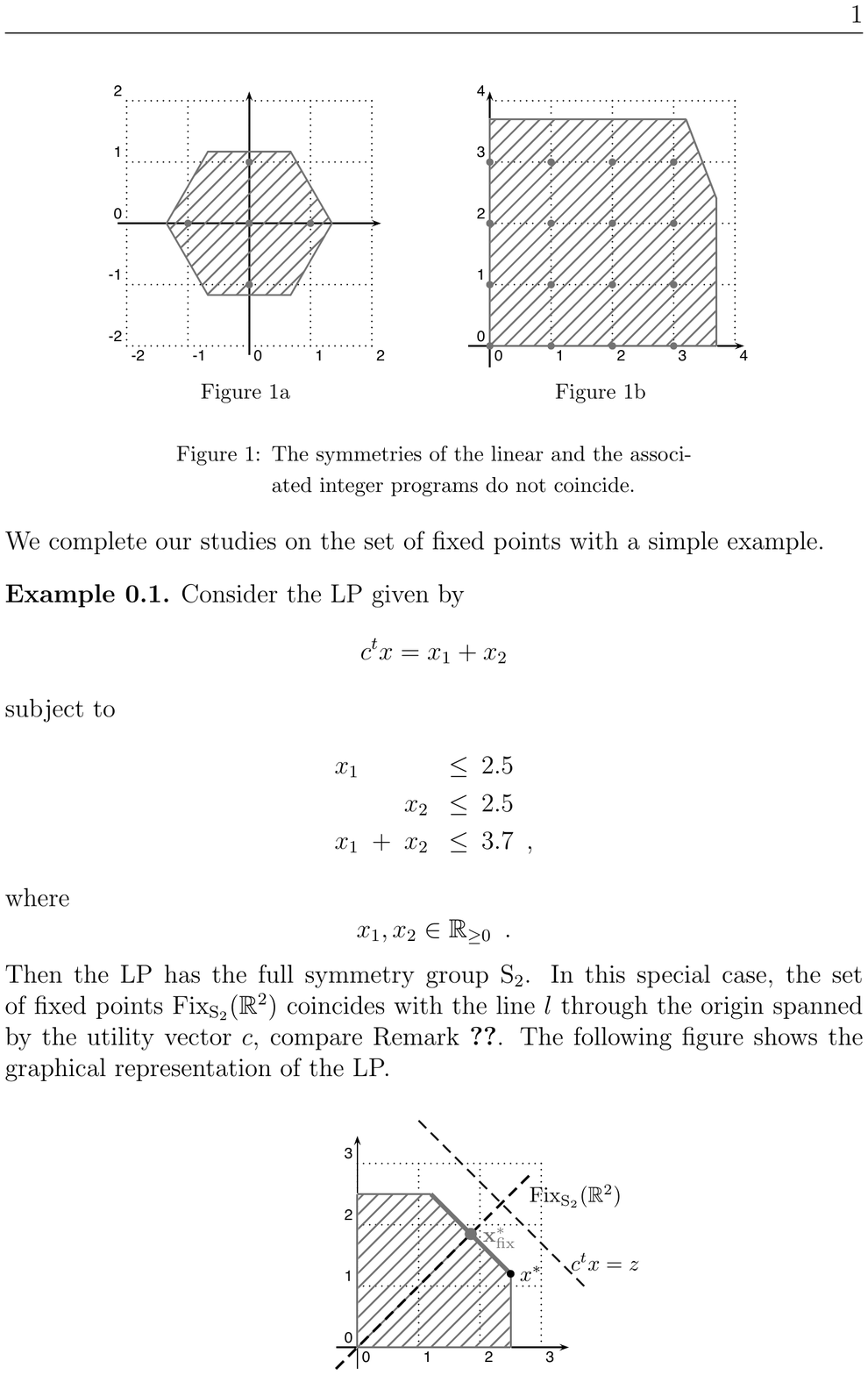}
\caption{The symmetries of the linear and the associated integer program do not coincide}
\label{fig:1}       
\end{figure}

Since we are forced to rely on the linear description of an integer program to gain information
about its symmetries, we want to make sure that any symmetry of a linear program is a symmetry of
the associated integer program as well. As integer programs are naturally confined to the standard
lattice~$\zn$, we only consider orthogonal operations that leave the lattice invariant. In
particular, such an operation represented by a matrix~$M\in\onr$ maps any standard basis
vector~$e_j$ to an integer vector
\[Me_j={(m_{1j},\dots,m_{nj})}^t\in\zn\enspace,\]
which is the $j$-th column of the matrix~$M$. Hence, all columns of~$M$ have to be integral, that
is,
\[M\in\onr\cap\glnz\enspace,\]
where~$\glnz$ is the group of all integrally invertible matrices.
\begin{nota}
The group of all orthogonal matrices with integral entries
\[\onr\cap\glnz\leq\onr\]
is denoted by~$\onz$.
\end{nota}
Note that orthogonal matrices with integral entries always are integrally invertible. We want to
learn more about~$\onz$. Since any map~$M\in\onz$ preserves the distance, the
column~${(m_{1j},\dots,m_{nj})}^t$ is an integer vector of length~$1$, thus
\[(m_{1j},\dots,m_{nj})^t\in\{\pm e_i\,|\,1\leq i\leq n\}\]
for~$1\leq j\leq n$. Therefore, the set~$\onz$ only consists of signed permutation matrices. In
fact, since every signed permutation matrix is orthogonal and integral, the set of signed
permutation matrices is equal to~$\onz$. Apparently, the group~$\onz$ acts on the set
\[\{\mathbb{R}e_i\,|\,1\leq i\leq n\}\enspace.\]
The kernel of this action is the group of sign-changes denoted by~$\signchangeMatr$. Hence, the
group~$\signchangeMatr$ is a normal subgroup of~$\onz$. Furthermore, any element of~$\onz$, that
is, any signed permutation matrix~$M$ has a unique representation~$M=DP$, where~$D$ is a
sign-changing matrix, thus a diagonal matrix with entries~$\pm 1$ on its diagonal, and~$P$ is a
permutation matrix. Therefore, we have
\[\onz=\signchangeMatr\setPermMatn,\]
where~$\setPermMatn\leq\onz$ denotes the subgroup of all $(n\times n)$-permutation matrices.
Since~$\signchangeMatr$ and~$\setPermMatn$ intersect trivially, we finally conclude that~$\onz$
splits over~$\signchangeMatr$.
\begin{rem} The group~$\onz$ is the semidirect product
\[\onz=\signchangeMatr\rtimes\setPermMatn\enspace.\]
\end{rem}

In the literature, the group~$\onz$ appears in the context of finite reflection groups. More
precisely, the group~$\onz$ is the Coxeter group~$B_n$ of rank~$n$, compare~\cite{humphreys},
p.~5.\\
Due to the invariance of the standard lattice~$\zn$ under~$\onz$, the elements of~$\onz$ have the
potential to satisfy the strict requirements we made on symmetries. That is, elements of~$\onz$
that preserve an LP, i.e., its inequality system and its utility vector, also leave the associated
IP invariant.
\begin{rem}\label{LP_IP_symmetries_equal}
The invariance of an LP under an element of~$\onz$ implies the invariance of the related IP under
the same element.
\end{rem}

However, the reverse does not hold in general, since we can always add asymmetric cuts to an LP
without affecting the set of feasible points of the corresponding IP, compare Figure~1b. We could
now define symmetries of linear and integer programs as elements of~$\onz$ that leave invariant the
inequality system and the utility vector of the problem. But if we take into account the usual
linear and integer programming constraint~$x\in\rnpl$, which forces non-negativity of the
solutions, the set of possible symmetries shrinks from~$\onz$ to the group of permutation
matrices~$\setPermMatn\leq\onz$.\\

Now, how should we imagine the action of a symmetry group~$G\leq\setPermMatn$ on a linear or an
integer program? Since~$G$ is an automorphism group of~$\rn$, its elements permute the standard
basis vectors~$e_1,\dots,e_n$. Considering the bijective $G$-equivariant mapping
\[e_i\mapsto i,\enspace 1\leq i\leq n\enspace,\]
the permutation of the subscripts of the basic vectors is an action of a group~$G'\leq\Sn{n}$ on
the set of indices~$\{1,\dots,n\}$ which is isomorphic to the action of~$G$ on the standard basis.
Hence, we can always think of symmetry groups of linear or integer programs as subgroups
of~$\Sn{n}$.
\begin{rem}\label{action_of_Sn_on_Rn}
A group~$G\leq\Sn{n}$ acts on the linear space~$\rn$ via the $G$-equivariant mapping
\[\beta:\{1,\dots,n\}\to B:\enspace i\mapsto e_i\enspace,\]
where~$B$ is the set of the standard basis vectors~$e_1,\dots,e_n$ of~$\rn$.
\end{rem}

Due to Remark~\ref{LP_IP_symmetries_equal}, we are able to formulate the definition of symmetries
of linear programs and the corresponding integer programs simultaneously. Consider an LP of the
form
\begin{align}
\begin{split}\label{LP1}
&\mathrm{max}\enspace  c^tx\\
&\mathrm{s.t.}\hspace{0.35cm}  Ax\leq b,\enspace x\in\rnpl\enspace,
\end{split}
\intertext{and the corresponding IP given by}
\begin{split}\label{IP1}
&\mathrm{max}\enspace  c^tx\\
&\mathrm{s.t.}\hspace{0.35cm}  Ax\leq b,\enspace x\in\rnpl,\enspace x\in \zn\enspace,
\end{split}
\end{align}
where $A\in \mathbb{R}^{m\times n}, b\in \mathbb{R}^m$ and $c\in \rn\setminus\{0\}$. Note that the
LP~(\ref{LP1}) and the IP~(\ref{IP1}) have the additional constraint~$x\in\rnpl$.
\begin{nota}
An LP of the form~(\ref{LP1}) is denoted by $\Lambda$.
\end{nota}
Apparently, applying a permutation to the matrix~$A$ according to Remark~\ref{action_of_Sn_on_Rn}
translates into permuting the columns of~$A$. Since the ordering of the inequalities does not
affect the object they describe, we need to allow for arbitrary row permutations of the matrix~$A$.
The following definition takes these thoughts into account.
\begin{defi}
A \emph{symmetry of a matrix}~$A\in\mathbb{R}^{m\times n}$ is an element~$g\in\Sn{n}$ such that
there exists a row permutation~$\sigma\in\Sn{m}$ with
\[P_{\sigma}AP_{g}=A\enspace,\]
where~$P_{\sigma}$ and $P_g$ are the permutation matrices corresponding to~$\sigma$ and~$g$. The
\emph{full symmetry group of a matrix}~$A\in\mathbb{R}^{m\times n}$ is given by
\[\{g\in \Sn{n}\,\big{|}\,\exists\,{\sigma\in\Sn{m}}:\; P_{\sigma}AP_{g}=A\}\enspace.\]
A \emph{symmetry of a linear inequality system} $Ax\leq b$, where $A\in \mathbb{R}^{m\times n}$,
and $b\in \mathbb{R}^m$, is a symmetry~$g\in\Sn{n}$ of the matrix~$A$ via a row
permutation~$\sigma\in\Sn{m}$ which satisfies~$b^{\sigma}=b$.\\
A \emph{symmetry} of an LP~$\Lambda$
or its corresponding IP is a symmetry of the linear inequality system~$Ax\leq b$ that leaves the
utility vector~$c$ invariant. The \emph{full symmetry group} of~$\Lambda$ and the corresponding IP
is given by
\[\{g\in \Sn{n}\,\big{|}\,c^{g}=c,\,\exists\,{\sigma\in \Sn{m}}:\; (b^{\sigma}=b\,\wedge\,P_{\sigma}AP_{g}=A)\}\enspace.\]
\end{defi}
This is a definition of symmetry as it can be found in literature as well, see e.g.~\cite{margot2}.\\

Unfortunately, we cannot predict the effect on the symmetry group in general if we add constraints
to the inequality system. This is impossible even in the special case where the corresponding
polyhedron stays unaltered, as we will see in Example~\ref{ex_poly_ineq_not_same_sym}. However, in some cases we can at least guarantee that the symmetry group of the inequality
system does not get smaller.
\begin{theo}\label{same_sym_after_adding_ineqs}
Given a symmetry group~$G\leq\Sn{n}$ of two inequality systems $Ax\leq b$ and $A'x\leq b'$,
where~$A\in\mathbb{R}^{m\times n}$, $A'\in\mathbb{R}^{m'\times n}$, $b\in\mathbb{R}^m$, and
$b'\in\mathbb{R}^{m'}$, the group~$G$ also is a symmetry group of the inequality system
\[\begin{pmatrix} A\\A'
\end{pmatrix}x\leq \begin{pmatrix}
b\\b'
\end{pmatrix} \enspace.\]
\end{theo}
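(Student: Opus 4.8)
The plan is to check the definition of symmetry directly and elementwise: for each $g\in G$ I will produce a single row permutation of the stacked system that conjugates $\binom{A}{A'}$ back to itself and fixes $\binom{b}{b'}$. Note that nothing about the group structure of $G$ is needed beyond the fact that it is already given to be a subgroup of $\Sn{n}$; the whole content is that each individual $g$ remains a symmetry after stacking.

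So fix $g\in G$. Because $G$ is a symmetry group of $Ax\leq b$, there is a row permutation $\sigma\in\Sn{m}$ with $P_{\sigma}AP_g=A$ and $b^{\sigma}=b$, and because $G$ is a symmetry group of $A'x\leq b'$, there is a row permutation $\sigma'\in\Sn{m'}$ with $P_{\sigma'}A'P_g=A'$ and $(b')^{\sigma'}=b'$. These $\sigma,\sigma'$ need not be unique; I simply fix one choice of each. I then combine them into the block permutation $\tau\in\Sn{m+m'}$ acting as $\sigma$ on $\{1,\dots,m\}$ and as $\sigma'$ on $\{m+1,\dots,m+m'\}$, so that $\tau$ never mixes the two row blocks; its permutation matrix is the block-diagonal matrix $P_{\tau}=\begin{pmatrix}P_{\sigma}&0\\0&P_{\sigma'}\end{pmatrix}$. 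A direct block computation then yields
\[
P_{\tau}\begin{pmatrix}A\\A'\end{pmatrix}P_g=\begin{pmatrix}P_{\sigma}AP_g\\P_{\sigma'}A'P_g\end{pmatrix}=\begin{pmatrix}A\\A'\end{pmatrix},
\qquad
\begin{pmatrix}b\\b'\end{pmatrix}^{\tau}=\begin{pmatrix}b^{\sigma}\\(b')^{\sigma'}\end{pmatrix}=\begin{pmatrix}b\\b'\end{pmatrix},
\]
so $g$ is a symmetry of the stacked inequality system via $\tau$. Since $g\in G$ was arbitrary and $G\leq\Sn{n}$, this shows that $G$ is a symmetry group of the stacked system.

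The only place that needs care is the permutation-matrix bookkeeping: one must keep straight that $P_{\sigma}$ and $P_{\sigma'}$ act on rows while $P_g$ acts on columns, check that the block-diagonal $\tau$ really is an element of $\Sn{m+m'}$, and confirm that the superscript action on $\binom{b}{b'}$ decomposes blockwise in the same way. None of this is a genuine obstacle — the statement is essentially immediate once the combined row permutation is written down — so I do not expect a hard step here.
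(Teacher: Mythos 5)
Your proposal is correct and follows exactly the paper's argument: fix $g\in G$, choose row permutations $\sigma\in\Sn{m}$ and $\sigma'\in\Sn{m'}$ witnessing the symmetry of each system, and verify that the block-diagonal permutation $\begin{pmatrix}P_{\sigma}&0\\0&P_{\sigma'}\end{pmatrix}$ works for the stacked system. No differences worth noting.
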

\begin{proof}
Let~$g\in G$ be a symmetry of $Ax\leq b$ via the row permutation~$\sigma\in\Sn{m}$, and a symmetry
of $A'x\leq b'$ via~$\sigma'\in\Sn{m'}$, that is,
\[P_{\sigma}AP_g=A,\enspace P_{\sigma'}A'P_g=A',\enspace P_{\sigma}b=b,\enspace P_{\sigma'}b'=b'\enspace.\]
Then we get
\[\begin{pmatrix}
P_{\sigma}&0\\
0&P_{\sigma'}
\end{pmatrix}\begin{pmatrix}
A\\A'
\end{pmatrix}P_g=\begin{pmatrix}
P_{\sigma}AP_g\\P_{\sigma'}A'P_g
\end{pmatrix}=\begin{pmatrix}
A\\A'
\end{pmatrix}\]
and
\[\begin{pmatrix}
P_{\sigma}&0\\
0&P_{\sigma'}
\end{pmatrix}\begin{pmatrix}
b\\b'
\end{pmatrix}=\begin{pmatrix}
P_{\sigma}b\\
P_{\sigma'}b'
\end{pmatrix}=\begin{pmatrix}
b\\
b'
\end{pmatrix}\enspace.\]
Hence, the permutation~$g$ is a symmetry of the inequality system
\[\begin{pmatrix} A\\A'
\end{pmatrix}x\leq \begin{pmatrix}
b\\b'
\end{pmatrix}\]
via the row permutation~$\begin{pmatrix}
P_{\sigma}&0\\
0&P_{\sigma'}
\end{pmatrix}\in\mathbb{R}^{(m+m')\times(m+m')}$.
\end{proof}

\section{Orbits}\label{section_orbits}
The following basic terms, notations and first insights into actions of symmetry groups on linear
programs will turn out to be useful.
\begin{defi}
Given a group $G\leq\Sn{n}$ and an element $x\in \rn$, the \emph{orbit}~$x^G$ of~$x$ with respect
to $G$ is defined by
\[x^G:=\{x^g\,|\,g\in G\}\enspace.\]
\end{defi}
If $G$ is the symmetry group of an LP with the feasible region~$X$, the group~$G$ leaves~$X$
invariant. Hence, a point~$x$ is feasible if and only if all elements of~$x^G$ are feasible as
well.
\begin{rem}\label{feasibility_of_orbit}
Given a symmetry group $G\leq\Sn{n}$ of an LP~$\Lambda$, a point~$x$ is feasible for~$\Lambda$ if
and only if every element of the orbit~$x^G$ is feasible for~$\Lambda$.
\end{rem}
The following theorem states that applying symmetries does not change the value of the utility
function.
\begin{theo}\label{constant_objective_value}
Let $G\leq\Sn{n}$ be a symmetry group of an LP~$\Lambda$. Given $x\in\rn$ the utility function
of~$\Lambda$ is constant on the orbit~$x^G$.
\end{theo}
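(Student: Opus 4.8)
The plan is to reduce the statement to the defining property $c^g=c$ of a symmetry, combined with the fact that the elements of $G$ act on $\rn$ by orthogonal matrices. First I would recall from Remark~\ref{action_of_Sn_on_Rn} that every $g\in G\leq\Sn{n}$ acts on $\rn$ through its permutation matrix $P_g$, so that $x^g=P_gx$ for $x\in\rn$. This action is linear, and since $P_g$ is a permutation matrix it is in fact orthogonal, i.e. $P_g^{t}=P_g^{-1}$.

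Next I would use that $g$, being a symmetry of $\Lambda$, fixes the utility vector: $c^g=c$, which in matrix form reads $P_gc=c$; by orthogonality this also gives $P_g^{t}c=P_g^{-1}c=c$. The computation of the utility value of an arbitrary orbit element $x^g$ is then immediate:
\[
c^{t}x^g=c^{t}P_gx=(P_g^{t}c)^{t}x=c^{t}x\enspace.
\]
Since $\id\in G$ and hence $x\in x^G$, this shows that every point of $x^G$ has the same utility value $c^{t}x$, so the utility function is constant on $x^G$.

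The only subtlety, and it is a minor bookkeeping point rather than a genuine obstacle, is the direction of the action: depending on the convention, $x^g=P_gx$ or $x^g=P_g^{-1}x$, and correspondingly $c^g=c$ translates to $P_gc=c$ or $P_g^{t}c=c$. Because permutation matrices satisfy $P_g^{t}=P_g^{-1}$, the two conditions are equivalent, and the displayed computation goes through in either case. No further difficulty arises.
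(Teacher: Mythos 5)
Your proof is correct and takes essentially the same approach as the paper: both arguments reduce to the invariance $c^g=c$ together with the fact that the permutation action preserves the standard inner product. You express this via the orthogonality $P_g^{t}=P_g^{-1}$ of the permutation matrix, while the paper performs the equivalent index substitution $\sum_i c_{i^g}x_{i^g}=\sum_{i^g} c_{i^g}x_{i^g}$ inside the sum; these are the same fact in different notation, and your remark on the direction of the action is handled correctly.
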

\begin{proof}
By definition, every symmetry $g\in G$ fixes the utility vector $c$. Therefore, we have
\[c^tx^g=(c^g)^tx^g=\sum_{i\in I_n} (c^g)_i(x^g)_i=\sum_{i\in I_n} c_{i^g}x_{i^g}=\sum_{i^g\in I_n} c_{i^g}x_{i^g}=c^tx\]
for every element $x^g$ of $x^G$.
\end{proof}
The orbits of two elements $x,\widetilde{x}\in \rn$ are equal if and only if $x$ and $\widetilde{x}$ are
equivalent, i.e., there exists an element $g\in G$ with $x^g=\widetilde{x}$.\\
Acting on the standard basis $B:=\stbn$ of $\rn$, the group $G$ splits $B$ into $k$ disjoint
orbits.
\begin{nota}
An orbit of a group action on $B$ is denoted by $O$, and the set of all orbits is denoted by
$\mathcal{O}$. The subspace spanned by an orbit is denoted by~$V$.
\end{nota}
Formulating an LP problem, the variables can be named in an arbitrary way. Therefore, we can always
assume that the decomposition into orbits is aligned to the order of the basis $B$, in the
following sense:
\begin{rem}\label{ordered_orbits}
Without loss of generality, the orbits of $G$ on $B$ are given by
\begin{align*}
O_1&=\{e_1,\ldots,e_{n_1}\}\enspace,\\
O_i&=\{e_{s_{i-1}+1},\ldots,e_{s_{i-1}+n_i}\}\enspace,
\end{align*}
for $i\in\{2,\ldots,k\}$, where $k$ is the number of orbits, $n_i$ the number of elements in orbit
$O_i$, and $s_i$ is defined by $s_i:=\sum_{j=1}^i n_j$.
\end{rem}
\begin{con}
The corresponding spans of the orbits $O_i$ are denoted by $V_i$.
\end{con}
Applying Theorem~\ref{constant_objective_value} to a unit vector $e_i$, we get some important
information about the structure of the utility vector $c$.
\begin{cor}\label{form_utility_vec}
Let $e_i,e_j\in B$ be two elements of the same orbit $O$ under a group $G$. Then the entries $c_i$
and $c_j$ of the utility vector $c$ are equal.
\end{cor}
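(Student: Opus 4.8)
The plan is to apply Theorem~\ref{constant_objective_value} to the single unit vector $x=e_i$ and then simply read off the coordinates of $c$ from the resulting equality of utility values.

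First I would observe that $e_i$ and $e_j$ lying in the same orbit $O$ means precisely that there is a $g\in G$ with $e_i^g=e_j$, so in particular $e_j\in e_i^G$. Here it is worth noting that the orbit $O\subseteq B$ is an orbit of exactly the action of $G$ on $\rn$ described in Remark~\ref{action_of_Sn_on_Rn}; hence the statement ``$e_i$ and $e_j$ are in the same orbit'' transfers without any change from $B$ to $\rn$, and the element $g$ just produced is genuinely an element of the group acting on $\rn$.

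Second, Theorem~\ref{constant_objective_value} guarantees that the utility function is constant on $e_i^G$, so $c^te_i=c^te_j$. Since $c^te_\ell=c_\ell$ for every standard basis vector $e_\ell$, evaluating this at $\ell=i$ and at $\ell=j$ yields $c_i=c^te_i=c^te_j=c_j$, which is the claim.

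There is essentially no obstacle; the only point deserving a line of care is the identification just mentioned of the orbit of $e_i$ in $B$ with its orbit in $\rn$, which is immediate because both are taken with respect to the same action. If one preferred not to invoke Theorem~\ref{constant_objective_value} as a black box, one could argue directly instead: writing $(x^g)_k=x_{k^g}$ as in the proof of that theorem, one obtains $c^te_i=c^te_i^g$ by the same reindexing, and then concludes as above.
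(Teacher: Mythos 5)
Your proof is correct and follows exactly the route the paper intends: the corollary is stated immediately after the remark that one should apply Theorem~\ref{constant_objective_value} to a unit vector $e_i$, and your argument (find $g$ with $e_i^g=e_j$, conclude $c_i=c^te_i=c^te_j=c_j$) is precisely that application spelled out. The extra care you take about identifying the orbit in $B$ with the orbit in $\rn$ is harmless and consistent with Remark~\ref{action_of_Sn_on_Rn}.
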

Referring to Remark~\ref{ordered_orbits}, the utility vector $c$ has the following structure:
\[c=(\underbrace{\gamma_1,\ldots,\gamma_1}_{n_1},\underbrace{\gamma_2\ldots,\gamma_2}_{n_2},\ldots,\underbrace{\gamma_k,\ldots,\gamma_k}_{n_k})^t\enspace.\]

We do not want to suppress the fact that there are other ways to define symmetries of linear
programs. Apart from the question whether to consider affine or linear transformations, another
important subject needs to be put up for discussion. As already mentioned in the introduction, the
original motivation for the study of such symmetries was the unnecessarily large size of the
branch-and-cut trees caused by symmetric solutions sharing the same utility value. Hence, we should
focus on operations that leave invariant the utility vector and the feasible region, which is the
polyhedron described by the inequality system of the linear program. Now obviously, many different
inequality systems give rise to the same polyhedron. Therefore, the invariance of an inequality
system implies the invariance of the polyhedron, but the reverse is not true, as the following
example illustrates:
\begin{ex}\label{ex_poly_ineq_not_same_sym}
Consider the LP given by the utility vector $c=(1,1)^t$ and the inequality system
\begin{alignat*}{2}
x_1&\;+&\;x_2&\;\leq \;2\\
x_1&\;&\;&\;\geq \;0\\
&&\;x_2&\;\geq \;0\enspace.
\end{alignat*}
Obviously, the permutation~$g=(1\enspace 2)\in\Sn{2}$ is a symmetry of the inequality system, thus
a symmetry of the feasible region. If we add the redundant constraint
\[x_1\;\leq\;2\enspace,\]
the new inequality system describes the same polyhedron. Hence~$g$ still is a symmetry of the
feasible region, but the inequality system itself does not show any symmetry anymore. Adding
another redundant constraint
\[x_2\;\leq\;2\enspace,\]
we retrieve the original symmetry group of the inequality system, again without changing the
symmetry group of the feasible region.
\end{ex}
So why did we choose this restrictive definition of symmetries for linear and integer programs? The
main problem is the lack of apposite descriptions of the feasible region. The inequality system is
the only source of information in this context, and the conversion into a description that provides
direct access to the symmetries of the feasible region might already be equivalent to solving the
problem itself.

\section{The Set of Fixed Points}\label{chapterLPapproach}
Symmetries in linear programs do not attract much attention in the literature, maybe because they
do not influence the performance of standard solving procedures like the simplex algorithm in a
negative way. But even though linear programs are solvable in polynomial time, it is always worth
looking for generic methods to save calculation time. In this section, we will focus on the
question how symmetries can contribute to this goal.\\

As it will turn out later in this section, the points in~$\rn$ that are fixed by a permutation
group~$G\leq\Sn{n}$ play the key role in our approach. Hence, we will use the first part of this
section to tame these points by means of linear algebra.
\begin{defi} Given a subset $Y\subseteq \rn$ and a group $G\leq\Sn{n}$ acting on $Y$, the
\emph{set of fixed points} of $Y$ with respect to an element $g\in G$ is defined by
\[\FixgY:=\{y\in Y\,|\,y^g=y\}\enspace.\]
Therefore, the \emph{set of fixed points} of $Y$ with respect to $G$ is given by
\[\FixGY:=\{y\in Y\,|\,y^g=y \text{ for all } g\in G\}=\bigcap_{g\in G}\FixgY\enspace.\]
\end{defi}
Recall that a group~$G\leq\Sn{n}$ acts on~$\rn$ as described in Remark~\ref{action_of_Sn_on_Rn},
thus we interpret~$G$ as a linear group. In terms of linear algebra, the set of fixed points
$\FixgR$ is the eigenspace $\Eigg$ corresponding to the eigenvalue $1$. Since $\FixGR$ is the
intersection of all of those eigenspaces, the structure of $\FixGR$ is not arbitrary.
\begin{rem}\label{FixGR_is_subspace}
The set of fixed points~$\FixGR$ with respect to a group~$G\leq\Sn{n}$ is a subspace of~$\rn$.
\end{rem}
Now let~$G\leq\Sn{n}$ be a symmetry group of a linear program~$\Lambda$, compare~(\ref{LP1}). Then
the utility vector~$c$ of the linear program is fixed by every~$g$ in~$G$. Since~$G$ acts as a
linear group, the line
\[l:=\{rc\,|\,r\in\mathbb{R}\}\]
is pointwise fixed by every~$g$ in~$G$, that is, the line~$l$ is in~$\FixgR$ for every~$g$ in~$G$,
thus in the intersection of these sets.
\begin{rem}\label{l_in_FixGR}
The line $l$ through the origin spanned by $c$ is a subspace of the set of fixed points~$\FixGR$.
\end{rem}
We are particularly interested in the exact dimension of~$\FixGR$. By Remark~\ref{l_in_FixGR}, we
already know that~$\FixGR$ is at least one-dimensional. To determine its dimension precisely, we
first need to consider the dimension of a certain subspace of $\FixGR$.
\begin{lem}\label{orbitspan_intersects_fix_one-dim}
Let~$O$ be a subset of the standard basis~$B$ of~$\rn$ and~$G\leq\Sn{n}$ a group acting
transitively on~$O$. Then the intersection~$\FixGV$ of the span $V:=\langle O\rangle$ and $\FixGR$
is determined by
\[\FixGV=\langle\sum_{e_i\in O} e_i\rangle\enspace.\]
In particular, the subspace~$\FixGV$ is one-dimensional.
\end{lem}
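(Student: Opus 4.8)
The plan is to show both inclusions. The inclusion $\langle\sum_{e_i\in O} e_i\rangle \subseteq \FixGV$ is the easy direction: the vector $v_0:=\sum_{e_i\in O} e_i$ clearly lies in $V=\langle O\rangle$, and since every $g\in G$ permutes the elements of $O$ (because $G$ acts on $O$), applying $g$ merely reorders the summands, so $v_0^g=v_0$. Hence $v_0\in\FixGR$, and therefore $\langle v_0\rangle\subseteq\FixGV$.

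For the reverse inclusion, I would take an arbitrary $v\in\FixGV$ and write it in the standard basis as $v=\sum_{e_i\in O}\lambda_i e_i$ (no other basis vectors appear since $v\in V$). The condition $v^g=v$ for all $g\in G$ translates, after comparing coefficients, into $\lambda_i=\lambda_{i^{g}}$ for all $i$ with $e_i\in O$ and all $g\in G$ — here I would use the index action on $\{1,\dots,n\}$ from Remark~\ref{action_of_Sn_on_Rn}. Now transitivity of $G$ on $O$ enters: for any two basis vectors $e_i,e_j\in O$ there is some $g\in G$ with $i^g=j$, so all coefficients $\lambda_i$ coincide, say $\lambda_i=\lambda$ for all $i$. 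Then $v=\lambda\sum_{e_i\in O}e_i=\lambda v_0\in\langle v_0\rangle$, which finishes the inclusion. The ``in particular'' claim follows because $v_0\neq 0$ (its coordinates are all $1$ on the nonempty set $O$), so $\langle v_0\rangle$ is exactly one-dimensional.

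The only point requiring any care is the coefficient-comparison step: one must be careful that the action of $g$ on $v$ sends $e_i\mapsto e_{i^g}$ (equivalently, permutes coordinates by $g$), so that the coefficient of $e_j$ in $v^g$ is $\lambda_{j^{g^{-1}}}$, and then $v^g=v$ for all $g$ is equivalent to $\lambda$ being constant on the $G$-orbit of each index — which is all of (the index set of) $O$ by transitivity. This is bookkeeping rather than a genuine obstacle, but it is where an index slip could creep in, so I would state the index action explicitly before comparing coefficients.
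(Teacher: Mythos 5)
Your proof is correct, and it takes a genuinely different route from the paper's. You argue directly: write an arbitrary $v\in\FixGV$ as $\sum_{e_i\in O}\lambda_i e_i$, observe that invariance under each $g$ forces the coefficient function $i\mapsto\lambda_i$ to be constant on $G$-orbits of indices, and conclude from transitivity that all $\lambda_i$ coincide, so $v$ is a scalar multiple of $v_0=\sum_{e_i\in O}e_i$. The paper instead proves only the dimension bound $\dim\FixGV\leq 1$, and does so by contradiction: it introduces the hyperplanes $W_j=\langle O\setminus\{e_j\}\rangle$ of $V$, applies the dimension formula to show that $\dim\FixGV\geq 2$ would force a nonzero fixed vector inside $W_m$, and then uses transitivity to transport a nonzero coordinate into position $m$, contradicting membership in $W_m$. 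Your argument is more elementary and more informative --- it exhibits every fixed vector explicitly rather than merely bounding the dimension --- and it avoids the dimension formula entirely. The one point you correctly flag, the bookkeeping of whether the coefficient of $e_j$ in $v^g$ is $\lambda_{j^{g^{-1}}}$, is exactly the index manipulation the paper also performs (in its expression $w^g=\sum_i a_{i^{(g^{-1})}}e_i+a_le_m$), so nothing is lost there; just make sure the convention matches the action fixed in Remark~\ref{action_of_Sn_on_Rn}. Both proofs use transitivity in the same essential way, but yours uses it to identify all coefficients at once, while the paper uses it only to move a single nonzero coefficient to a distinguished position.
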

\begin{proof}
Without loss of generality, let $O=\stbm$. Since $O$ is invariant under $G$, the vector
\[v:=\sum_{e_i\in O} e_i=(\underbrace{1,\ldots,1}_{m},0,\ldots,0)^t\in V\]
is fixed by $G$, thus
\[v\in V\cap\FixGR=\FixGV\enspace,\]
and further $\langle v\rangle\subseteq\FixGV$.\\
In order to prove the converse inclusion, it suffices to show that the dimension of~$\FixGV$ is not
greater than~$1$. To this end, we define the $(m-1)$-dimensional subspaces~$W_j\leq V$ by
\[W_j:=\langle O\backslash\{e_j\}\rangle\enspace.\]
Assume that the dimension of $\FixGV$ is greater than 1. By the dimension formula, we then have
\begin{align*}
\dim (W_j\cap\FixGV)&=\dim W_j+\dim\FixGV-\dim V=\\
&=\dim\FixGV-1\geq 1
\end{align*}
for every $j=1,\ldots,m$. In particular, there exists a vector
\[0\neq w:=\sum_{i=1}^{m-1}a_ie_i\in(W_m\cap\FixGV)\]
with at least one coefficient $a_l\neq 0$. Since $G$ acts transitively on $O$, we find an element
$g\in G$ that maps $e_l$ to $e_m$. Being an element of $\FixGV$, the vector $w$ is fixed by $g$.
Thus, we can write
\[w=w^g=\sum_{i=1}^{m-1}a_{i^{\left(g^{-1}\right)}}e_i+a_le_m\notin W_m\enspace,\]
contradicting the fact that $w\in W_m\cap\FixGV$. Consequently, we have $\dim\FixGV\leq 1$, and
therefore
\[\FixGV=\langle v\rangle=\langle\sum_{e_i\in O} e_i\rangle\enspace.\]
\end{proof}
By Lemma~\ref{orbitspan_intersects_fix_one-dim}, we are now able to establish a direct relation
between the dimension of~$\FixGR$ and the number of orbits generated by~$G$. For orbits and the
corresponding spans we use the notation we introduced in Section~\ref{section_orbits}.
\begin{theo}\label{dim_FixGR_equals_number_of_orbits}
Let $k$ be the number of orbits of~$B$ under~$G$. Then the following statements hold:
\begin{enumerate}[i)]
\item The set of fixed points with respect to $G$ can be written as
\[\FixGR=\bigoplus_{i=1}^k\FixGVi\enspace.\]\label{dim_FixGR_equals_number_of_orbits_representation_of_FixGR}
\item The set of fixed points $\FixGR$ is a subspace of $\rn$ of dimension $k$.
\end{enumerate}
\end{theo}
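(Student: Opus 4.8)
The plan is to use Lemma~\ref{orbitspan_intersects_fix_one-dim} together with the orbit decomposition of the standard basis~$B$. Recall from Section~\ref{section_orbits} that~$G$ splits~$B$ into the orbits $O_1,\dots,O_k$, and hence $\rn=V_1\oplus\dots\oplus V_k$ as an internal direct sum, since the $O_i$ partition~$B$. The idea is that a fixed point of~$G$ decomposes, along this direct sum, into components that are each fixed by~$G$, because each $V_i$ is $G$-invariant (the group permutes the basis vectors of $O_i$ among themselves). So the first step is to establish the inclusion $\FixGR\subseteq\bigoplus_{i=1}^k\FixGVi$: given $x\in\FixGR$, write $x=x_1+\dots+x_k$ with $x_i\in V_i$, and observe that for any $g\in G$ we have $x^g=x_1^g+\dots+x_k^g$ with $x_i^g\in V_i$ again; comparing this with $x=x_1+\dots+x_k$ and using the uniqueness of the decomposition in a direct sum forces $x_i^g=x_i$ for each~$i$, so $x_i\in V_i\cap\FixGR=\FixGVi$.

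The reverse inclusion $\bigoplus_{i=1}^k\FixGVi\subseteq\FixGR$ is immediate: each $\FixGVi$ is contained in $\FixGR$ by definition, and $\FixGR$ is a subspace by Remark~\ref{FixGR_is_subspace}, so it is closed under sums. Moreover the sum $\FixGV_1+\dots+\FixGV_k$ is automatically direct because it sits inside the direct sum $V_1\oplus\dots\oplus V_k$ (intersecting a direct-sum decomposition with a subspace keeps the sum direct). This establishes part~\ref{dim_FixGR_equals_number_of_orbits_representation_of_FixGR}.

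For part~(ii), I would apply Lemma~\ref{orbitspan_intersects_fix_one-dim} to each orbit $O_i$: since $G$ acts transitively on $O_i$, the lemma gives $\FixGVi=\langle\sum_{e_j\in O_i}e_j\rangle$, which is one-dimensional. Taking dimensions in the direct sum decomposition from part~(i) then yields $\dim\FixGR=\sum_{i=1}^k\dim\FixGVi=\sum_{i=1}^k 1=k$.

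I do not expect any serious obstacle here; the theorem is essentially a bookkeeping consequence of the lemma once one notices that the orbit spans $V_i$ give a $G$-invariant direct sum decomposition of~$\rn$. The one point that needs a clean argument rather than a wave of the hand is the claim that a $G$-fixed vector has $G$-fixed components in this decomposition — that is where the invariance of each $V_i$ under~$G$ and the uniqueness of the direct-sum representation are both used, and it is worth spelling out the short computation $x_1+\dots+x_k=x=x^g=x_1^g+\dots+x_k^g$ with $x_i,x_i^g\in V_i$.
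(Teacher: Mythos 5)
Your proposal is correct and follows essentially the same route as the paper: decompose $\rn=\bigoplus_{i=1}^k V_i$ along the orbit spans, use the $G$-invariance of each $V_i$ together with the uniqueness of the direct-sum representation to show that a $G$-fixed vector has $G$-fixed components, and then apply Lemma~\ref{orbitspan_intersects_fix_one-dim} to each transitive orbit to get $\dim\FixGVi=1$ and hence $\dim\FixGR=k$. No gaps; this matches the paper's argument in both structure and the key computation $\sum_i v_i=v=v^g=\sum_i v_i^g$.
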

\begin{proof}
In both parts of the proof we will use the fact that~$\FixGR$ is a subspace of~$\rn$, which we
already know by Remark~\ref{FixGR_is_subspace}. We start with the proof for the special
representation of~$\FixGR$.
\begin{enumerate}[i)]
\item Since the set of orbits $\mathcal{O}=\{O_1,\ldots,O_k\}$ is a partition of the basis~$B$ of~$\rn$, we have
\begin{align}\label{dim_FixGR_equals_number_of_orbits_Vi_trivial_intersection}
V_i\cap V_j=\{0\}
\end{align}
for~$i\neq j$, and further
\[\rn=\bigoplus_{i=1}^k V_i\enspace.\]
Thus, we can write
\[\FixGR=\rn\cap\FixGR=\left(\bigoplus_{i=1}^k V_i\right)\cap\FixGR\enspace.\]
Hence, any point~$v\in\FixGR$
has a unique representation~$v=\sum_{i=1}^kv_i$, where~$v_i\in V_i$. For this
representation, we get for any $g\in G$
\[\sum_{i=1}^kv_i=v=v^g=\sum_{i=1}^kv_i^g\enspace.\]
The uniqueness of the representation implies that~$g$ maps each~$v_i$ to a certain~$v_j\in V_j$ of
the representation. But since every subspace~$V_i$ is invariant under~$G$, we get~$v_i^g\in V_i$,
thus~$v_i^g=v_i$, due to~(\ref{dim_FixGR_equals_number_of_orbits_Vi_trivial_intersection}). Hence,
we have proved the inclusion
\[\left(\bigoplus_{i=1}^k V_i\right)\cap\FixGR\subseteq\bigoplus_{i=1}^k \left(V_i\cap\FixGR\right)\enspace.\]
The converse inclusion is immediate, thus we finally get
\begin{align*}
\FixGR&=\rn\cap\FixGR=\left(\bigoplus_{i=1}^k V_i\right)\cap\FixGR=\\
&=\bigoplus_{i=1}^k \left(V_i\cap\FixGR\right)=\bigoplus_{i=1}^k\FixGVi\enspace.
\end{align*}
\item In order to prove the statement on the dimension of $\FixGR$, we recall that $G$ acts transitively
on every orbit $O_i$.  Therefore, Lemma~\ref{orbitspan_intersects_fix_one-dim} yields
\[\dim\FixGVi=1\]
for all $i=1,\ldots,k$. Using i), the dimension of $\FixGR$ can therefore be computed as
\begin{align*}
\dim\FixGR=\dim\left(\bigoplus_{i=1}^k \FixGVi\right)=\sum_{i=1}^k \dim\FixGVi=k\enspace.
\end{align*}
\end{enumerate}
\end{proof}
The statement in Theorem~\ref{dim_FixGR_equals_number_of_orbits} is particularly interesting if the
group~$G$ generates only one single orbit.
\begin{cor}\label{transitive_action_one_dim_FixGR}
If $G$ acts transitively on the standard basis~$B$, the set of fixed points~$\FixGR$ is
one-dimensional.
\end{cor}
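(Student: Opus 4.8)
The plan is to observe that this is an immediate specialization of Theorem~\ref{dim_FixGR_equals_number_of_orbits}. If $G$ acts transitively on the standard basis~$B$, then by definition $B$ consists of a single orbit, so the number of orbits is $k=1$. Plugging $k=1$ into part~ii) of Theorem~\ref{dim_FixGR_equals_number_of_orbits} yields $\dim\FixGR = 1$ directly, and part~i) even identifies $\FixGR$ with the span of the single orbit, which is all of~$\rn$ viewed through $V_1 = \langle B\rangle = \rn$.

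Alternatively, I would bypass the theorem and invoke Lemma~\ref{orbitspan_intersects_fix_one-dim} straight away: take $O := B$, so that $V := \langle O\rangle = \rn$ and $G$ acts transitively on~$O$ by hypothesis. The lemma then gives $\FixGR = \rn\cap\FixGR = \FixGV = \langle\sum_{e_i\in B} e_i\rangle$, which is one-dimensional (the generating vector is nonzero). I do not expect any real obstacle here — the content has already been done in Lemma~\ref{orbitspan_intersects_fix_one-dim} and Theorem~\ref{dim_FixGR_equals_number_of_orbits}; the only thing to check is the trivial fact that ``transitive action'' is exactly the statement ``$k=1$'', after which the conclusion is a one-line substitution.
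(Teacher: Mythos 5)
Your proof is correct and takes the same route as the paper: the corollary is stated there as an immediate consequence of Theorem~\ref{dim_FixGR_equals_number_of_orbits}, since transitivity on $B$ means there is a single orbit, i.e.\ $k=1$. Your alternative via Lemma~\ref{orbitspan_intersects_fix_one-dim} with $O=B$ is equally valid (just note that part~i) of the theorem gives $\FixGR=\FixGVi[1]$ rather than $\FixGR=V_1$, but this does not affect your argument).
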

We complete our studies on the set of fixed points with a simple example.
\begin{ex}\label{ex_set_of_fixed_points}
Consider the LP given by
\[c^tx=x_1+x_2\]
subject to
\begin{alignat*}{3}
x_1\enspace&&&\leq\enspace&2.5&\\
&&x_2\enspace&\leq\enspace&2.5&\\
x_1\enspace&+\enspace&x_2\enspace&\leq\enspace&3.7&\enspace,
\end{alignat*}
where
\[x_1,x_2\in\mathbb{R}_{\geq 0}\enspace.\]
Then the LP has the full symmetry group~$\Sn{2}$. In this special case, the set of fixed
points~$\mathrm{Fix}_{\Sn{2}}(\mathbb{R}^2)$ coincides with the line~$l$ through the origin spanned
by the utility vector~$c$, compare Remark~\ref{l_in_FixGR}. The following figure shows the
graphical representation of the LP.\vspace{0.2cm}

\begin{figure}[htp]
\centering
 \includegraphics{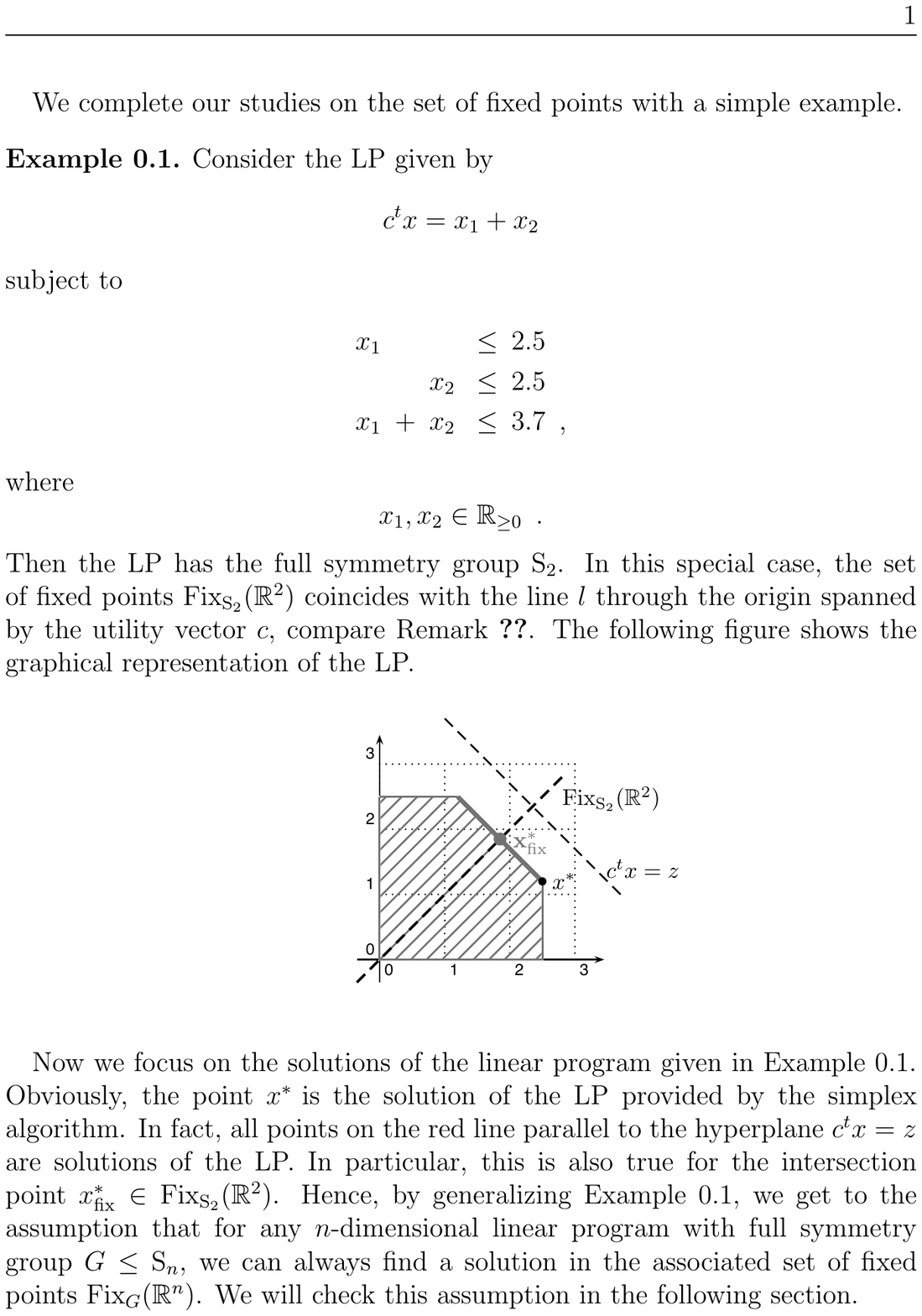}
\caption{Graphical representation of the LP}
\label{fig:2}       
\end{figure}

\end{ex}

Now we focus on the solutions of the linear program given in Example~\ref{ex_set_of_fixed_points}.
Obviously, the point~$\xstar$ is the solution of the LP provided by the simplex algorithm. In fact,
all points on the bold line parallel to the hyperplane~$c^tx=z$ are solutions of the~LP. In
particular, this is also true for the intersection
point~$\xstarfix\in\mathrm{Fix}_{\Sn{2}}(\mathbb{R}^2)$. Hence, by generalizing
Example~\ref{ex_set_of_fixed_points}, we get to the assumption that for any $n$-dimensional linear
program with full symmetry group~$G\leq\Sn{n}$, we can always find a solution in the associated set
of fixed points~$\FixGR$. We will check this assumption in the following section.

\section{Solutions in the Set of Fixed Points}
Before we turn to the main issue, we need to introduce a special representation of the barycenter
of an orbit, which plays an essential role in our approach.
\begin{lem}\label{representation_barycenter}
Given $x\in\rn$, the barycenter of the orbit $x^G$ can be written as follows:
\[\frac{1}{|x^G|}\sum_{y\in x^G}y=\frac{1}{|G|}\sum_{g\in G}x^g\enspace.\]
\end{lem}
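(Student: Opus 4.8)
The statement is a standard orbit-counting identity: the average of a point over its orbit equals the average of its images under all group elements. The key fact from group theory is the orbit--stabilizer relation. Let $H := \mathrm{Stab}_G(x) = \{g \in G \mid x^g = x\}$ be the stabilizer of $x$ in $G$. Then the orbit map $g \mapsto x^g$ induces a bijection between the left cosets $G/H$ and the orbit $x^G$, and in particular $|G| = |x^G|\cdot|H|$.

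**Key steps.** First I would fix $x \in \rn$ and introduce the stabilizer $H$ together with the orbit--stabilizer bijection $G/H \to x^G$, $gH \mapsto x^g$ (this is well-defined and injective precisely because $x^g = x^{g'}$ iff $g^{-1}g' \in H$). Second, I would rewrite the right-hand sum $\sum_{g \in G} x^g$ by partitioning $G$ into its $|x^G|$ cosets of $H$: for each coset $gH$, every element $gh$ satisfies $x^{gh} = (x^h)^g = x^g$, so the coset contributes exactly $|H|$ copies of the single point $x^g$. Summing over the cosets gives
\[
\sum_{g \in G} x^g = |H| \sum_{y \in x^G} y \enspace.
\]
Third, I would divide both sides by $|G| = |x^G|\cdot|H|$ and cancel the factor $|H|$, obtaining
\[
\frac{1}{|G|}\sum_{g \in G} x^g = \frac{|H|}{|x^G|\cdot|H|}\sum_{y \in x^G} y = \frac{1}{|x^G|}\sum_{y \in x^G} y \enspace,
\]
which is the claimed identity.

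**Main obstacle.** There is no serious obstacle here; the content is entirely the orbit--stabilizer theorem applied to the (linear) action of $G$ on $\rn$ from Remark~\ref{action_of_Sn_on_Rn}. The only point requiring a little care is the bookkeeping in the coset partition — making explicit that each fiber of the map $g \mapsto x^g$ has exactly $|H|$ elements, so that each orbit point is counted with the same multiplicity $|H|$ in the sum over $G$. Once that multiplicity is pinned down, the result is immediate. It is worth noting that the identity also makes geometric sense: since $G$ acts by permutation matrices (hence linearly), the barycenter on either side is a genuine convex combination lying in the affine hull of the orbit, and one could alternatively phrase the proof as: both sides equal $\frac{1}{|G|}\sum_{g\in G} g \cdot x$ up to grouping terms — but the coset argument above is the cleanest route.
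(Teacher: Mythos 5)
Your proposal is correct and follows essentially the same route as the paper: both arguments invoke the orbit--stabilizer theorem and partition $G$ into cosets of the stabilizer, so that each orbit point is counted with multiplicity $|\mathrm{Stab}_G(x)|$ in the sum over $G$. The only slip is a convention mismatch: since the paper writes the action as a right action ($ (x^g)^s = x^{gs} $), the fibers of $g\mapsto x^g$ are the \emph{right} cosets $Hg$ (where $x^{hg}=(x^h)^g=x^g$), not the left cosets $gH$ as you state; this does not affect the counting, since all fibers still have size $|H|$, and the rest of your computation goes through verbatim.
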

\begin{proof}
Since the stabilizer $G_x$ is a subgroup of $G$, we have
\[G=\bigcup_{g\in G}G_x g\enspace.\]
Let $S=\{s_1,\ldots,s_{|x^G|}\}\subseteq G$ be a set of representatives of the family of cosets
$G_x g$. Then
\[\sum_{y\in x^G}y=\sum_{s\in S}x^s\enspace.\]
Furthermore, the orbit-stabilizer theorem yields the relation
\[|x^G|=|G:G_x|=\frac{|G|}{|G_x|}\enspace,\]
so we get
\[\frac{1}{|x^G|}\sum_{y\in x^G}y=\frac{|G_x|}{|G|}\sum_{s\in S}x^s=\frac{1}{|G|}\sum_{s\in S}|G_x|x^s\enspace.\]
Since $x^g=x$ for all $g\in G_x$, we have
\[|G_x|x^s=\sum_{g\in G_x}(x^g)^s\enspace,\]
and therefore
\[\frac{1}{|G|}\sum_{s\in S}|G_x|x^s=\frac{1}{|G|}\sum_{s\in S}\sum_{g\in G_x}(x^g)^s=\frac{1}{|G|}\sum_{s\in S}\sum_{g\in G_x}x^{(gs)}\enspace.\]
Considering the disjoint representation
\[G=\dot\bigcup_{s\in S}G_x s\]
of $G$, we finally obtain
\[\frac{1}{|G|}\sum_{s\in S}\sum_{g\in G_x}x^{(gs)}=\frac{1}{|G|}\sum_{g\in G}x^g\enspace.\]
\end{proof}
The representation of the barycenter provided by Lemma~\ref{representation_barycenter} facilitates
the proof of the following statement about feasible points in the set of fixed points.
\begin{theo}\label{feasible_point_in_FixGR}
Let $X$ be the feasible region of the LP $\Lambda$. If $x\in \rn$ is feasible
for $\Lambda$, there exists a feasible point $\xfix$ in $\FixGR$ with the same
utility value as $x$.
\end{theo}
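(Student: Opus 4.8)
The plan is to take the given feasible point $x$ and replace it by the barycenter of its orbit, then check that this barycenter (i) lies in $\FixGR$, (ii) is still feasible, and (iii) has the same utility value as $x$. Concretely, set
\[
\xfix:=\frac{1}{|x^G|}\sum_{y\in x^G}y\enspace,
\]
which by Lemma~\ref{representation_barycenter} can equally be written as $\frac{1}{|G|}\sum_{g\in G}x^g$.

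First I would show $\xfix\in\FixGR$. Using the representation $\xfix=\frac{1}{|G|}\sum_{g\in G}x^g$, apply an arbitrary $h\in G$: since $G$ acts linearly on $\rn$ (Remark~\ref{action_of_Sn_on_Rn}), we get $\xfix^{\,h}=\frac{1}{|G|}\sum_{g\in G}x^{gh}$, and as $g\mapsto gh$ permutes $G$, this sum is unchanged. Hence $\xfix^{\,h}=\xfix$ for all $h\in G$, so $\xfix\in\FixGR$.

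Next, feasibility: each $x^g$ lies in the feasible region $X$ by Remark~\ref{feasibility_of_orbit} (since $x$ is feasible, every element of its orbit is), and $X$ is convex by Remark~\ref{X_convex}. The barycenter $\xfix$ is a convex combination of the finitely many points $x^g$ (equivalently, of the distinct orbit points $y\in x^G$ with equal weights), so $\xfix\in X$, i.e.\ $\xfix$ is feasible for $\Lambda$. Finally, for the utility value: by Theorem~\ref{constant_objective_value} the utility function $x\mapsto c^tx$ is constant on the orbit $x^G$, say with value $c^tx$ on every $y\in x^G$. By linearity of $c^t(\cdot)$,
\[
c^t\xfix=\frac{1}{|x^G|}\sum_{y\in x^G}c^ty=\frac{1}{|x^G|}\,|x^G|\,c^tx=c^tx\enspace.
\]
Thus $\xfix\in\FixGR$ is feasible and has the same utility value as $x$, which proves the claim.

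There is no real obstacle here; the only point requiring care is keeping straight that the averaging weights are uniform over the \emph{orbit} $x^G$ (not over $G$), so that the convex-combination argument for feasibility and the constant-value argument for the utility both go through cleanly — and Lemma~\ref{representation_barycenter} is exactly what lets us switch freely to the sum over $G$ when proving $G$-invariance.
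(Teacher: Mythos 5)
Your proposal is correct and follows essentially the same route as the paper: define $\xfix$ as the barycenter of the orbit, invoke Lemma~\ref{representation_barycenter} to rewrite it as the average over $G$ for the fixed-point check, use Remark~\ref{feasibility_of_orbit} together with convexity of $X$ for feasibility, and Theorem~\ref{constant_objective_value} for the utility value. No gaps; nothing further to add.
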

\begin{proof}
We define
 \[\xfix:=\frac{1}{|x^G|}\sum_{y\in x^G}y\enspace.\]
Since $\xfix$ is the barycenter of $x^G$, it belongs to the convex hull of $x^G$. The feasibility
of the elements of $x^G$, compare Remark~\ref{feasibility_of_orbit}, and the convexity of~$X$ now imply that $\xfix$ is feasible, too.\\
Applying Lemma~\ref{representation_barycenter} and the linearity of $G$, we have
\[
\xfix^{g'}=\left(\frac{1}{|x^G|}\sum_{y\in x^G}y\right)^{g'}=\left(\frac{1}{|G|}\sum_{g\in G}x^{g}\right)^{g'}=
\frac{1}{|G|}\sum_{g\in G}x^{gg'}=\frac{1}{|G|}\sum_{\tilde{g}\in G}x^{\tilde{g}}=\xfix\]
for all $g'\in G$. This proves that $\xfix$ is a fixed point of $G$, thus $\xfix\in\FixGR$.\\
By Theorem~\ref{constant_objective_value}, we already know that
\[c^tx^g=c^tx\]
for all $g\in G$, hence
\[c^t\xfix=\frac{1}{|G|}\sum_{g\in G}c^tx^g=\frac{1}{|G|}\sum_{g\in G}c^tx=c^tx\enspace.\]
This shows that $\xfix$ has the same utility value as $x$.
\end{proof}
The application of Theorem~\ref{feasible_point_in_FixGR} to a solution~$\xstar$ of~$\Lambda$ leads
to a remarkable result. The following corollary, which records this result, is of vital importance,
since it prepares the ground for the algorithm we are going to present subsequent to this
theoretical part.
\begin{cor}\label{solution_in_FixGR}
If $\Lambda$ has a solution, there also exists a solution $\xstarfix\in\FixGR$.
\end{cor}
In particular, this result shows that the existence of a solution of~$\Lambda$ implies the
existence of a solution of~$\Lambda$ restricted to~$\FixGR$. Furthermore, the point~$\xstarfix$ --
and therefore every solution of the restricted LP -- has the same objective value as a solution
of~$\Lambda$. Consequently, we only need to solve the restricted problem to get a solution for the
original LP. As we will see in the next section, this kind of relationship between two LP problems
can be very useful. Therefore, we introduce an appropriate partial order~$\preceq$ on the family of
LP problems of dimension~$n$ reflecting this relationship.
\begin{defi}
Let $\Lambda_1$ and $\Lambda_2$ be two linear programs of dimension~$n$ and $X_1, X_2\subseteq \rn$
the corresponding feasible regions. Then the linear program~$\Lambda_2$ \emph{is less or equal
than} the linear program $\Lambda_1$ if the following three conditions are satisfied:
\begin{itemize}
\item The solvability of $\Lambda_1$ implies the solvability of $\Lambda_2$.
\item $X_2$ is a subset of $X_1$.
\item The maximal utility values on the feasible regions~$X_1$ and~$X_2$ coincide, that is,
\[\max c^tX_1=\max
c^tX_2\enspace.\]
\end{itemize}
In this case, we write $\Lambda_2\preceq\Lambda_1$.
\end{defi}
Obviously, the relation $\preceq$ is reflexive, asymmetric and transitive, and thus a partial
order.\\

Before we turn to practical aspects, we want to direct attention to a special property of the
result in Corollary~\ref{solution_in_FixGR}. The statement connotes that the symmetry of a linear
program is always reflected in one of its solutions. This is what W.~C.~Waterhouse calls the
\emph{Purkiss Principle} in his studies on the question:
\begin{quote}
Do symmetric problems have symmetric solutions?
\end{quote}
In one of his papers, see~\cite{waterhouse}, he gives a list of concrete examples for this
principle, but he also shows that this property can not be taken for granted. 

\section{Substitutions and Retractions}
In this section our goal is to benefit from the results of the previous section by exploiting the
ordering of an LP and its restriction to the set of fixed points with respect to~$\preceq$. The
following theorem yields a detailed insight into this relation.
\begin{theo}\label{PLP_preceq_LP}
Let $G$ be a symmetry group of $\Lambda$. Then there exists a matrix~$P$ only depending on the
orbits of~$G$ such that the LP
\begin{align}
\begin{split}\label{LP_in_PLP_preceq_LP}
&\mathrm{max}\enspace c^t (Px)\\
&\mathrm{s.t.}\hspace{0.35cm} A(Px)\leq b,\enspace Px\in\rnpl,\enspace x\in\FixGR
\end{split}
\end{align}
is less or equal than $\Lambda$ with respect to the order $\preceq$.
\end{theo}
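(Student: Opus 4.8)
The plan is to construct $P$ explicitly as the linear map that sends the standard basis of $\rn$ onto a spanning set adapted to the orbit decomposition, so that $P$ restricted to $\FixGR$ becomes an isomorphism onto $\FixGR$, and then to verify the three defining conditions of $\preceq$ one by one. Concretely, using the ordered orbits $O_1,\dots,O_k$ of Remark~\ref{ordered_orbits} and Lemma~\ref{orbitspan_intersects_fix_one-dim}, each $\FixGVi$ is the line spanned by $v_i:=\sum_{e_j\in O_i}e_j$, and by Theorem~\ref{dim_FixGR_equals_number_of_orbits} we have $\FixGR=\bigoplus_{i=1}^k\langle v_i\rangle$. I would take $P$ to be the $(n\times n)$-matrix whose $j$-th column is $v_i$ whenever $j$ is the smallest index in the orbit $O_i$, and zero otherwise — equivalently, $P$ acts as the identity on the subspace $\FixGR$ (written in the basis $v_1,\dots,v_k$, padded out by the indices $s_{i-1}+1$) and kills the complementary coordinate directions. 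The key point is that $P$ depends only on the orbit partition of $G$, not on $A$, $b$, or $c$, which is exactly what the statement demands.

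Next I would check the three conditions. For the inclusion of feasible regions: if $x\in\FixGR$ with $A(Px)\leq b$ and $Px\in\rnpl$, I must show $Px$ lies in the feasible region $X$ of $\Lambda$ — but this is immediate, since the constraints $A(Px)\leq b$ and $Px\in\rnpl$ are literally the constraints of $\Lambda$ evaluated at the point $Px$, so the feasible region of~(\ref{LP_in_PLP_preceq_LP}), pushed forward by $P$, is a subset of $X$. For the solvability implication: assuming $\Lambda$ is solvable, Corollary~\ref{solution_in_FixGR} gives a solution $\xstarfix\in\FixGR$; since $P$ acts as the identity on $\FixGR$ (by construction, as $P v_i = v_i$ because column $s_{i-1}+1$ of $P$ is $v_i$ and $v_i$ has its support exactly on $O_i$ — one checks $Pv_i$ picks out that column), the preimage of $\xstarfix$ under $P$ restricted to $\FixGR$ is $\xstarfix$ itself, which is feasible for~(\ref{LP_in_PLP_preceq_LP}) with the same utility value; a compactness/boundedness argument on the relevant slice, or simply the observation that the utility function is bounded above on the feasible region of~(\ref{LP_in_PLP_preceq_LP}) by $\max c^tX$, yields solvability. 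For equality of optimal values: "$\leq$" holds because every feasible point of~(\ref{LP_in_PLP_preceq_LP}) maps under $P$ into $X$ with unchanged objective $c^t(Px)$, so $\max c^t(P\cdot)\leq\max c^tX$; "$\geq$" holds because $\xstarfix$ realizes $\max c^tX$ and is feasible for~(\ref{LP_in_PLP_preceq_LP}).

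The main obstacle I anticipate is being careful about what "$P$ acts as the identity on $\FixGR$" really means and why $Pv_i=v_i$: one needs to verify that with the chosen column structure, $P$ does send each $v_i$ to itself and, crucially, that $P$ maps all of $\FixGR$ bijectively onto all of $\FixGR$ — so that no solution is lost when passing from $\Lambda$ to the restricted program. Since $\{v_1,\dots,v_k\}$ is a basis of $\FixGR$ and $Pv_i = v_i$ for each $i$, linearity gives $P|_{\FixGR}=\id_{\FixGR}$, which settles this; the only genuine bookkeeping is matching the index $s_{i-1}+1$ to the correct column of $P$ using the conventions of Remark~\ref{ordered_orbits}. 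A secondary subtlety is the solvability clause: one should make sure the restricted LP is not unbounded — but this follows since its feasible set maps into $X$ under $P$ preserving the objective, so boundedness of $c^t$ on $X$ (guaranteed by solvability of $\Lambda$) transfers. Everything else is routine substitution.
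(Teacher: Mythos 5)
Your proposal is correct and follows essentially the same route as the paper: you build the identical block projection matrix $P$ (column $s_{i-1}+1$ of each orbit block equal to $v_i=\sum_{e_j\in O_i}e_j$, the rest zero), invoke Corollary~\ref{solution_in_FixGR} for solvability and equality of optimal values, and use the fact that $Px=x$ on $\FixGR$ to identify the restricted LP with the formulation~(\ref{LP_in_PLP_preceq_LP}). The only difference is presentational: the paper first observes that the restricted LP with constraint $x\in\FixGR$ is $\preceq\Lambda$ and then rewrites it via $P$, whereas you verify the three conditions of $\preceq$ directly on~(\ref{LP_in_PLP_preceq_LP}); your explicit check that $Pv_i=v_i$ and hence $P|_{\FixGR}=\id$ is exactly the "projection onto $\FixGR$" property the paper uses.
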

\begin{proof}
Let $\mathcal{O}$ be the set of orbits $O$ of $G$ on $B$. Referring to Remark~\ref{ordered_orbits}, we consider the orbits $O$ to be of the
form
\[O_i=\{e_{s_{i-1}+1},\ldots,e_{s_{i-1}+n_i}\}\enspace.\]
Let $\subsVi$ be the subspace of $\rn$ defined by
\[\subsVi:=\bigoplus_{j=1}^{i-1} V_j\oplus\FixGVi\oplus\bigoplus_{j=i+1}^k V_j\enspace.\]
In order to project every $V_i$ onto $\FixGVi=\langle\sum_{j=1}^{n_i}e_{s_{i-1}+j}\rangle$, we define the linear maps $f_{P_i}$ by
\begin{align*}
f_{P_i}&:\rn\to\subsVi\enspace,\\
f_{P_i}(e_l)&:=\begin{cases}
\sum_{j=1}^{n_i}e_{s_{i-1}+j}&\text{if }l=s_{i-1}+1\\
0&\text{if }l\in\{s_{i-1}+2,\ldots,s_{i-1}+n_i\}\\
e_l&\text{otherwise}\enspace.
\end{cases}
\end{align*}
Hence, the first element of $O_i$ is mapped to the sum of the elements of~$O_i$, while the other elements of $O_i$ are mapped to $0$.\\
The $n\times n$-matrix $P_i$ corresponding to $f_{P_i}$ is defined by
\[P_i:=\begin{pmatrix}
I_{s_{i-1}}&0&0\\
0&\widetilde{P_i}&0\\
0&0&I_{n-s_i}
\end{pmatrix},\enspace
\widetilde{P_i}:=\begin{pmatrix}
1&0&\dots&0\\
\vdots&\vdots&&\vdots\\
1&0&\dots&0
\end{pmatrix}\enspace,\]
where $\widetilde{P_i}\in\mathbb{R}^{n_i\times n_i}$.
According to Theorem~\ref{dim_FixGR_equals_number_of_orbits}, we have
\[\FixGR=\bigoplus_{i=1}^k\FixGVi\enspace.\]
Therefore, we are now able to define the map $f_P:\rn\to\FixGR$ by
\[f_P(x)=Px,\enspace\]
where
\begin{align}\label{P_def}
P:=\prod_{i=1}^k P_i=\begin{pmatrix}
\widetilde{P_1}&0&0\\
0&\ddots&0\\
0&0&\widetilde{P_k}
\end{pmatrix}=(B_1,\ldots,B_k)\enspace.
\end{align}
By Corollary~\ref{solution_in_FixGR}, we know that the restricted LP
\begin{align}
\begin{split}\label{LPtmp_in_PLP_preceq_LP}
&\mathrm{max}\enspace c^tx \\
&\mathrm{s.t.}\hspace{0.35cm} Ax\leq b,\enspace x\in\rnpl,\enspace x\in\FixGR\enspace.
\end{split}
\intertext{is less or equal than $\Lambda$. Since $f_P$ is a projection onto $\FixGR$, we have $Px=x$ for all $x\in\FixGR$. Hence, the
LP~(\ref{LPtmp_in_PLP_preceq_LP}) is equal to}
\begin{split}\nonumber
&\mathrm{max}\enspace c^t (Px)\\
&\mathrm{s.t.}\hspace{0.35cm} A(Px)\leq b,\enspace Px\in\rnpl,\enspace x\in\FixGR\enspace.
\end{split}
\end{align}
The transitivity of $\preceq$ now implies that the LP~(\ref{LP_in_PLP_preceq_LP}) is less or equal
than~$\Lambda$.
\end{proof}
Variables of a linear program that are tied together in one orbit are closely related. Therefore,
we introduce a notation for sets of such variables
\begin{nota}
Let $O$ be an orbit on the standard basis $B$ of $\rn$. The set of variables of an LP corresponding to the elements of $O$ is denoted by
$\setVarXO$.
\end{nota}
In order to translate the result of Theorem~\ref{PLP_preceq_LP} into an applicable algorithm, we perform the so-called \emph{substitution
procedure} computing $\hat{c}^t=c^tP$ and $\hat{A}=AP$ in the LP~(\ref{LP_in_PLP_preceq_LP}). According to the definition of $P$, see
(\ref{P_def}), the resulting LP is given by
\begin{align}
\begin{split}
&\mathrm{max}\enspace \hat{c}^tx\\
&\mathrm{s.t.}\hspace{0.35cm} \hat{A}x\leq b,\enspace Px\in\rnpl,\enspace x\in\FixGR\enspace,
\end{split}
\end{align}
where
\begin{align}\label{c_hat_with_zeros}
\hat{c}^t=c^tP=(\underbrace{\sum_{j=1}^{n_1}c_j,0,\ldots,0}_{n_1},\enspace\ldots\enspace,\underbrace{\sum_{j=1}^{n_k}c_{s_{k-1}+j},0,\ldots,0}_{n_k})^t
\end{align}
and
\[\hat{A}=AP=(AB_1,\ldots,AB_i,\ldots,AB_k)\enspace.\]
Straightforward computation yields
\begin{align}\label{A_hat_with_zeros}
AB_i=A\begin{pmatrix}
0\\
\widetilde{P_i}\\
0\end{pmatrix}=\begin{pmatrix}
\sum_{j=1}^{n_i}a_{1,s_{i-1}+j}&0&\dots&0\\
\vdots&\vdots&&\vdots\\
\sum_{j=1}^{n_i}a_{n,s_{i-1}+j}&0&\dots&0
\end{pmatrix}\enspace.
\end{align}
Furthermore, we have
\[Px=(\underbrace{x_1,\ldots,x_1}_{n_1},\enspace \ldots \enspace,\underbrace{x_{s_{k-1}+1},\ldots,x_{s_{k-1}+1}}_{n_k})^t\enspace.\]
This representation reveals that -- except for the constraint $x\in\FixGR$ -- the inequality system of the new LP does not depend on the
variables
\[x_{s_{i-1}+2},\ldots,x_{s_{i-1}+n_i}\]
for all $i\in\{1,\ldots,k\}$. Furthermore, the coefficient of the representative $x_{s_{i-1}+1}$ of
each set $\setVarXOi$ accumulates the original coefficients of all variables in $\setVarXOi$.
Therefore, we can interpret this procedure as a simultaneous substitution of the elements of each
$\setVarXOi$ by the representatives $x_{s_{i-1}+1}$.
\begin{nota}
The LP that is derived from $\Lambda$ by simultaneously substituting the elements of each
$\setVarXOi$ by the representatives $x_{s_{i-1}+1}$, and adding the constraint $x\in\FixGR$ is
denoted by $\SubLP$.
\end{nota}
By Theorem~\ref{PLP_preceq_LP}, we already know that $\SubLP$ is less or equal than $\Lambda$.
Hence, we only need to solve $\SubLP$ to obtain a solution of $\Lambda$. This fact can be expressed
in the following way.
\begin{cor}\label{solution_of_SubLP_solves_LP}
Every solution of $\SubLP$ is a solution of $\Lambda$ as well.
\end{cor}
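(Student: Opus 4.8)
The plan is to unwind the definitions and reduce everything to the two facts already established: Theorem \ref{PLP_preceq_LP}, which says that the LP \eqref{LP_in_PLP_preceq_LP} satisfies \eqref{LP_in_PLP_preceq_LP}~$\preceq\Lambda$, and the observation that $\SubLP$ is exactly the LP obtained from \eqref{LP_in_PLP_preceq_LP} by the substitution $\hat{c}^t=c^tP$, $\hat{A}=AP$ — that is, $\SubLP$ and \eqref{LP_in_PLP_preceq_LP} are literally the same optimization problem, only written with $\hat{c}$, $\hat{A}$ in place of $c^tP$, $AP$. So the first step is to record that a point $x$ is feasible for $\SubLP$ if and only if it is feasible for \eqref{LP_in_PLP_preceq_LP}, and in that case the two objective values $\hat{c}^tx$ and $c^t(Px)$ agree; this is immediate from $\hat{c}^t=c^tP$ and $\hat{A}=AP$.

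Next I would use the definition of $\preceq$. From \eqref{LP_in_PLP_preceq_LP}~$\preceq\Lambda$ we get that $\max c^tX$ (the optimal value of $\Lambda$) equals the optimal value of \eqref{LP_in_PLP_preceq_LP}, hence equals the optimal value of $\SubLP$. Now let $\xstar$ be a solution of $\SubLP$. By the previous paragraph $\xstar$ is feasible for \eqref{LP_in_PLP_preceq_LP} and attains there the common optimal value; since $f_P$ is a projection onto $\FixGR$ we have $P\xstar=\xstar$, so $\xstar\in\FixGR\subseteq\rn$ and $\xstar$ satisfies $A\xstar=A(P\xstar)\leq b$ and $\xstar=P\xstar\in\rnpl$, i.e. $\xstar$ is feasible for $\Lambda$. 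Finally $c^t\xstar=c^t(P\xstar)=\hat{c}^t\xstar$ equals the optimal value of $\Lambda$, so $\xstar$ is a solution of $\Lambda$.

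The only genuinely delicate point is the bookkeeping around the projection $P$: one must be careful that the added constraint $x\in\FixGR$ in \eqref{LP_in_PLP_preceq_LP} (and hence in $\SubLP$) guarantees $P\xstar=\xstar$, so that the inequalities $\hat{A}\xstar\leq b$ and $P\xstar\in\rnpl$ really do translate back into $A\xstar\leq b$ and $\xstar\in\rnpl$ for the original $\Lambda$ with the \emph{same} $x$, rather than merely for the image $P\xstar$. Everything else is a routine chase through the definition of $\preceq$ and the identities $\hat{c}^t=c^tP$, $\hat{A}=AP$. No new estimates or constructions are needed; the corollary is essentially a restatement of Theorem \ref{PLP_preceq_LP} once one notes that $\SubLP$ is just a renaming of the LP \eqref{LP_in_PLP_preceq_LP}.
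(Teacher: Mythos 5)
Your proposal is correct and follows essentially the same route as the paper, which derives the corollary directly from Theorem~\ref{PLP_preceq_LP} together with the definition of $\preceq$ (feasible region contained in $X$, equal optimal values) after identifying $\SubLP$ with the LP~(\ref{LP_in_PLP_preceq_LP}) via $\hat{c}^t=c^tP$, $\hat{A}=AP$, and $Px=x$ on $\FixGR$. You merely spell out the bookkeeping that the paper leaves implicit.
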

A first application of the substitution procedure to a basic example will shed light on the effectiveness and the potential of the algorithm.
\begin{ex}\label{ex_subsAlgo}
Consider the following LP $\Lambda_0$ given by the inequality system $Ax\leq b$ and the utility vector $c$, where
\[A=\begin{pmatrix}
1&1&0&0\\
0&0&1&1\\
1&0&1&0\\
0&1&0&1
\end{pmatrix},\enspace
b=\begin{pmatrix}
1\\2\\3\\3
\end{pmatrix}\enspace
\]
and
\[c=(1,1,2,2)^t\enspace.\]
We can expand this LP to
\begin{alignat*}{5}
x_1\enspace&+\enspace&x_2\enspace&&&&&\leq\enspace&1&\\
&&&&x_3\enspace&+\enspace&x_4\enspace&\leq\enspace&2&\\
x_1\enspace&&&+\enspace&x_3\enspace&&&\leq\enspace&3&\\
&&x_2\enspace&&&+\enspace&x_4\enspace&\leq\enspace&3&
\end{alignat*}
and
\[c^tx=x_1+x_2+2x_3+2x_4\enspace.\]
Obviously, we can exchange $x_1$ and $x_2$ without affecting $c$ or the inequality system if we
exchange $x_3$ and $x_4$ at the same time. Therefore, this LP has
\[G:=\langle(1\enspace2)(3\enspace4)\rangle\]
as a symmetry group, and $G$ divides $B$ into the two orbits~$O_1=\{e_1,e_2\}$
and~$O_2=\{e_3,e_4\}$.\\
Applying the substitution procedure to the set of orbits $\mathcal{O}=\{O_1,O_2\}$, we obtain the new LP $\mathrm{Sub}(\Lambda_0)$ defined by
$\hat{A}x\leq b$ and $\hat{c}$, where
\[\hat{A}=AP=\begin{pmatrix}
1&1&0&0\\
0&0&1&1\\
1&0&1&0\\
0&1&0&1
\end{pmatrix}\begin{pmatrix}
1&0&0&0\\
1&0&0&0\\
0&0&1&0\\
0&0&1&0
\end{pmatrix}=\begin{pmatrix}
2&0&0&0\\
0&0&2&0\\
1&0&1&0\\
1&0&1&0
\end{pmatrix}\]
and
\[\hat{c}^t=c^tP=(1,1,2,2)\begin{pmatrix}
1&0&0&0\\
1&0&0&0\\
0&0&1&0\\
0&0&1&0
\end{pmatrix}=(2,0,4,0)\enspace.\]
The expanded version of the new LP is given by
\begin{alignat*}{3}
2x_1\enspace&&&\leq\enspace&1&\\
&&2x_3\enspace&\leq\enspace&2&\\
x_1\enspace&+\enspace&x_3\enspace&\leq\enspace&3&\\
x_1\enspace&+\enspace&x_3\enspace&\leq\enspace&3&
\end{alignat*}
and
\[\hat{c}^tx=2x_1+4x_3\enspace,\]
where $x\in\FixGR$. According to Corollary~\ref{solution_of_SubLP_solves_LP}, we only need to solve
the LP $\mathrm{Sub}(\Lambda_0)$ which is almost independent of the variables $x_2$ and $x_4$.
\end{ex}
Note that the LP $\mathrm{Sub}(\Lambda_0)$ can actually be derived from the original LP by
substituting $x_1$ for $x_2$ and $x_3$ for $x_4$. Furthermore, we observe that we do not use any
detailed knowledge about the structure of the group $G$ except for the specific decomposition of
$B$ into orbits. Therefore, we can apply the substitution procedure to any LP problem with known
orbit decomposition even if we do not have any additional information about the group structure of
the symmetry group $G$ of the linear program.
\begin{rem}
Regarding Theorem~\ref{PLP_preceq_LP} and the substitution procedure, the assumption of having a certain group $G$ can be relaxed to the
assumption of having a certain orbit decomposition.
\end{rem}
Except for the constraint $x\in\FixGR$, the LP $\SubLP$ is completely independent of certain
variables. Therefore, we now focus on a reduction of the dimension of the LP. This reduction can be
realized by a certain operator, which we are now going to introduce.
\begin{defi}
Given an LP $\Lambda$ with the set of orbits $\mathcal{O}=\{O_1,\ldots,O_k\}$, the
\emph{retraction} $r$ is defined by $r:\SubLP\mapsto\Lambda'$, where
\begin{align*}
\Lambda'=\left\{\begin{aligned}
&\mathrm{max}\enspace  \hat{c}^tM_ry\\
&\mathrm{s.t.}\hspace{0.35cm}  \hat{A}M_ry\leq b,\enspace y\in\rkpl\enspace
\end{aligned}\right.
\end{align*}
and $M_r\in\mathbb{R}^{n\times k}$ is defined by
\[M_r=(v_1,\ldots,v_k),\enspace v_i=\sum_{e_j\in O_i} e_j=(0,\ldots,0,\underbrace{1,\ldots,1}_{n_i},0,\ldots,0)^t\enspace.\]
The LP $\Lambda'$ is called the \emph{retract} of $\SubLP$, and we denote $\Lambda'$ by
$\RetLP$.
\end{defi}
Note that in contrast to the $n$-dimensional LP~$\SubLP$, the dimension of $\RetLP$ is equal to the
number of orbits, which coincides with the dimension of the set of fixed points~$\FixGR$, see
Theorem~\ref{dim_FixGR_equals_number_of_orbits}.
\begin{rem}\label{dim_RetLP}
Given a linear program $\Lambda$ with the set of orbits $\mathcal{O}=\{O_1,\ldots,O_k\}$, the retract~$\RetLP$
of~$\SubLP$ is a linear program of dimension~$k$.
\end{rem}
To justify the term retraction, we introduce an appropriate inclusion $\iota$ satisfying
\[r\circ\iota=\mathrm{id}\enspace.\]
\begin{defi}
The \emph{inclusion} $\iota$ is defined by $\iota:\RetLP\mapsto\Lambda''$, where
\begin{align*}
\Lambda''=\left\{\begin{aligned}
&\mathrm{max}\enspace  \hat{c}^tM_rM_{\iota}x\\
&\mathrm{s.t.}\hspace{0.35cm}  \hat{A}M_rM_{\iota}x\leq b,\enspace x\in\rnpl,\enspace x\in\FixGR
\end{aligned}\right.
\end{align*}
and $M_{\iota}\in\mathbb{R}^{k\times n}$ is defined by
\[M_{\iota}=(e_1,e_{s_1+1},\ldots,e_{s_{k-1}+1})^t\enspace.\]
\end{defi}
The retraction $r$ applied to the LP $\SubLP$ eliminates the zeros in the representations (\ref{c_hat_with_zeros}) and (\ref{A_hat_with_zeros})
of $\hat{c}$ and $\hat{A}$. Conversely, the inclusion $\iota$ reintroduces these zeros in the following sense:\\
Obviously, $M_{\iota}$ can be written as
\[M_{\iota}=(C_1,\ldots,C_k),\enspace C_i=(e_i,0,\ldots,0)\in\mathbb{R}^{k\times n_i}\enspace,\]
where $e_i$ is the $i$-th unit vector in $\mathbb{R}^k$. Referring to the block representation $P=(B_1,\ldots,B_k)$ given in (\ref{P_def}), we
have
\[M_rC_i=B_i\]
for every $i\in\{1,\ldots,k\}$, and therefore
\begin{align}\label{M_rM_iota_equals_P}
M_rM_{\iota}=(M_rC_1,\ldots,M_rC_k)=(B_1,\ldots,B_k)=P\enspace.
\end{align}
Using the property $PP=P$ of the projection matrix $P$, we finally get
\[\hat{c}^tM_rM_{\iota}x=\hat{c}^tPx=c^tPPx=c^tPx=\hat{c}^tx\]
and
\[\hat{A}M_rM_{\iota}x=\hat{A}Px=APPx=APx=\hat{A}x\enspace.\]
This shows that $\Lambda''=\SubLP$, and thus
\[(r\circ\iota)(\RetLP)=r(\SubLP)=\RetLP\enspace.\]
With respect to this category theoretical property, we now want to show that we only need to solve
the retract $\RetLP$ of $\SubLP$. For this, we analyze the linear maps
\begin{align*}
r&:\rk\to\rn,\enspace y\mapsto M_ry\\
\intertext{and}
\iota&:\rn\to\rk,\enspace x\mapsto M_{\iota}x
\end{align*}
by considering the corresponding matrices $M_r$ and $M_{\iota}$. On the one hand, the
retraction~$r$ maps any element of $\rk$ to an element of $\FixGR$. On the other hand, the map
$\iota$ applied to a vector $x\in\rn$ picks exactly the representative $x_{s_{i-1}+1}$ of each set
$\setVarXOi$. Concerning the LP problems $\SubLP$ and $\RetLP$, this behavior has the following
effect.
\begin{lem}\label{regain_h}
Let $X$ be the feasible region of $\SubLP$. Then the following statements hold:
\begin{enumerate}[i)]
\item If $y$ is feasible for $\RetLP$, then $x:=M_ry$ is feasible for $\SubLP$.\label{regain_h_feasible_Mry}
\item The feasible region of $\RetLP$ is given by $Y:=M_{\iota}X$.\label{regain_h_feasible_regionY}
\item The LP problems $\SubLP$ and $\RetLP$ have the same maximal utility value.\label{regain_h_same_maxVal}
\end{enumerate}
\end{lem}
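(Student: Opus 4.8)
The plan is to treat all three statements as consequences of a single algebraic dictionary between the $n$-dimensional LP $\SubLP$, whose feasible points all lie in $\FixGR$, and the $k$-dimensional LP $\RetLP$, a dictionary set up by the matrices $M_r$ and $M_{\iota}$. The facts I will use are: the identity $M_rM_{\iota}=P$ from~(\ref{M_rM_iota_equals_P}); the complementary identity $M_{\iota}M_r=I_k$, which follows at once from the fact that $M_ry=\sum_i y_iv_i$ has the value $y_i$ in every coordinate belonging to the orbit $O_i$ (the $v_i$ having pairwise disjoint support), while $M_{\iota}$ reads off exactly the representative coordinate $x_{s_{i-1}+1}$ of each orbit; the fact that $P$ is the projection onto $\FixGR$ (Theorem~\ref{dim_FixGR_equals_number_of_orbits}), so that $Px=x$ for $x\in\FixGR$; and the fact that $M_r$ is a $0/1$-matrix, hence maps $\rkpl$ into $\rnpl$, whereas $M_{\iota}$ merely selects coordinates, hence maps $\rnpl$ into $\rkpl$. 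Throughout, write $X=\{x\in\rn : \hat{A}x\le b,\ Px\in\rnpl,\ x\in\FixGR\}$ for the feasible region of $\SubLP$ and let $\tilde{Y}=\{y\in\rk : \hat{A}M_ry\le b,\ y\in\rkpl\}$ be the feasible region of $\RetLP$; part~\ref{regain_h_feasible_regionY} is then the claim $\tilde{Y}=M_{\iota}X$.

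For~\ref{regain_h_feasible_Mry}: if $y$ is feasible for $\RetLP$, then $\hat{A}M_ry\le b$ and $y\in\rkpl$; set $x:=M_ry$. The inequality $\hat{A}x\le b$ is immediate, the membership $x\in\FixGR$ holds because $r$ maps $\rk$ into $\FixGR$, and $x=M_ry\in\rnpl$ because $M_r$ has nonnegative entries, so $Px=x\in\rnpl$. Hence $x\in X$.

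For~\ref{regain_h_feasible_regionY}: if $y\in\tilde{Y}$, then by~\ref{regain_h_feasible_Mry} the point $x:=M_ry$ lies in $X$, and $M_{\iota}x=M_{\iota}M_ry=y$, so $y\in M_{\iota}X$. Conversely, for $x\in X$ put $y:=M_{\iota}x$; then $M_ry=M_rM_{\iota}x=Px=x$ since $x\in\FixGR$, so $\hat{A}M_ry=\hat{A}x\le b$, and $y=M_{\iota}x\in\rkpl$ because $M_{\iota}$ selects coordinates of $x=Px\in\rnpl$. Thus $y\in\tilde{Y}$, proving $\tilde{Y}=M_{\iota}X$.

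For~\ref{regain_h_same_maxVal}: combining the two previous parts, $y\mapsto M_ry$ is a bijection from $\tilde{Y}$ onto $X$ with inverse $x\mapsto M_{\iota}x$, and it intertwines the objective functions, since the objective value $\hat{c}^tM_ry$ of $\RetLP$ at $y$ equals the objective value $\hat{c}^tx$ of $\SubLP$ at $x=M_ry$. Hence the two sets of attained utility values are identical, so their maxima agree (the statement also covering the degenerate situations where neither maximum is attained). I do not anticipate a genuine obstacle here; the one point requiring care is verifying that the nonnegativity constraints match up across the dictionary — $Px\in\rnpl$ together with $x\in\FixGR$ on the $\SubLP$ side versus $y\in\rkpl$ on the $\RetLP$ side — and this is precisely where the $0/1$ structure of $M_r$ and the coordinate-selecting nature of $M_{\iota}$ are used.
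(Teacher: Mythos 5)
Your proposal is correct and follows essentially the same route as the paper: part i) via the nonnegativity of $M_r$ and the fact that $r$ maps $\rk$ into $\FixGR$, part ii) via the two identities $M_rM_{\iota}=P$ and $M_{\iota}M_r=I_k$ together with $Px=x$ on $\FixGR$, and part iii) by transporting the objective along the resulting correspondence between $X$ and $Y=M_{\iota}X$. The only (harmless) difference is cosmetic: you phrase iii) as a bijection of attained utility values, whereas the paper writes $M_rY=PX=X$ and compares the maxima directly.
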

\begin{proof}
We will use the statement in~\ref{regain_h_feasible_Mry}) to
prove~\ref{regain_h_feasible_regionY}), and the representation in~\ref{regain_h_feasible_regionY})
to show~\ref{regain_h_same_maxVal}).
\begin{enumerate}[i)]
\item Let $y$ be a feasible point of $\RetLP$. Since $y$ is in $\rkpl$ and $r$ maps $\rk$ to $\FixGR$, the point  $x=M_ry$ is in
$\rnpl\cap\FixGR$. Moreover, we have
    \[\hat{A}x=\hat{A}M_ry\leq b\enspace,\]
    that is, the point $x$ is feasible for $\SubLP$.
\item Let $x$ be in $X$. Then $x$ is feasible for $\SubLP$, and thus
\[x\in\rnpl\cap\FixGR\enspace.\]
Therefore, we have $Px=x$ and $M_{\iota}x\in\rkpl$. By the equality $M_rM_{\iota}=P$, see
(\ref{M_rM_iota_equals_P}), we obtain
    \[\hat{A}M_r(M_{\iota}x)=\hat{A}(M_rM_{\iota})x=\hat{A}(Px)=\hat{A}x\leq b\enspace,\]
    that is, $M_{\iota}x$ is feasible for $\RetLP$. Conversely, let $y$ be a feasible point of $\RetLP$. According to~\ref{regain_h_feasible_Mry}),
the point $x=M_ry$ is feasible for $\SubLP$. Straightforward computation yields
    \[M_{\iota}M_r=I_k\enspace.\]
    Hence, $y$ can be written as
    \[y=I_ky=M_{\iota}M_ry=M_{\iota}x\enspace.\]
    Therefore, any feasible point of $\RetLP$ is in $M_{\iota}X$. Conclusively, the set $Y=M_{\iota}X$ defines the feasible region of $\RetLP$.
\item Since $X$ is a subset of $\FixGR$, the definition of $Y$ given in~\ref{regain_h_feasible_regionY}) yields
\[M_rY=M_rM_{\iota}X=PX=X\enspace.\]
Therefore, we can write
\[\max_{x\in X} \hat{c}^tx=\max_{x\in M_rY} \hat{c}^tx=\max_{y\in Y} \hat{c}^tM_ry\enspace,\]
hence the optimal values of $\SubLP$ and $\RetLP$ are equal.
\end{enumerate}
\end{proof}
The relations $M_{\iota}M_r=I_k$ and $M_rM_{\iota}x=x$ for all $x\in\FixGR$ which we used in our
proof reveal in particular that $\iota$ and $r$ are bijective and mutually inverse if we restrict
$\iota$ to $\FixGR$. The following corollary records this interesting relationship.
\begin{cor}\label{r_iota_bijective}
The linear maps
\begin{align*}
r&:\rk\to\FixGR,\enspace y\mapsto M_ry\\
\intertext{and}
\iota|_{\phantom{}_{\FixGR}}&:\FixGR\to\rk,\enspace x\mapsto M_{\iota}x
\end{align*}
are bijective and mutually inverse.
\end{cor}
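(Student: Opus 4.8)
The plan is to assemble the two matrix identities that already surfaced in the proof of Lemma~\ref{regain_h} and read off that $r$ and $\iota|_{\FixGR}$ are two-sided inverses of one another. First I would check that $r$ really does land in $\FixGR$: the columns of $M_r$ are the vectors $v_i=\sum_{e_j\in O_i}e_j$, which by Lemma~\ref{orbitspan_intersects_fix_one-dim} span the lines $\FixGVi$, so by Theorem~\ref{dim_FixGR_equals_number_of_orbits} the image of $y\mapsto M_ry$ is contained in $\bigoplus_{i=1}^k\FixGVi=\FixGR$. This makes the map $r:\rk\to\FixGR$ well defined, and it is linear since it is given by multiplication with a fixed matrix.

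Next I would invoke the identity $M_\iota M_r=I_k$, which was established by direct computation in the proof of Lemma~\ref{regain_h}; this gives $\iota|_{\FixGR}\circ r=\id_{\rk}$. For the other composition I would use the block identity $M_rM_\iota=P$ from~(\ref{M_rM_iota_equals_P}) together with the fact that $P$ is a projection onto $\FixGR$, so that $Px=x$ for every $x\in\FixGR$; hence $(r\circ\iota|_{\FixGR})(x)=M_rM_\iota x=Px=x$ for all such $x$, i.e.\ $r\circ\iota|_{\FixGR}=\id_{\FixGR}$. Together these two equalities say exactly that $r$ and $\iota|_{\FixGR}$ are mutually inverse, and in particular both bijective.

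There is no real obstacle here, since every ingredient has already been proved; the one point that deserves a moment's attention is simply the codomain of $r$, namely observing that the columns of $M_r$ lie in $\FixGR$ rather than merely in $\rn$, which is what lets us speak of bijectivity \emph{onto} $\FixGR$ in the first place.
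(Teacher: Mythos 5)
Your proof is correct and follows the paper's own route exactly: the corollary is obtained from the identities $M_{\iota}M_r=I_k$ and $M_rM_{\iota}x=Px=x$ for $x\in\FixGR$, both established in the proof of Lemma~\ref{regain_h}. Your extra remark that the columns of $M_r$ lie in $\FixGR$ (so the codomain is legitimate) is a point the paper only asserts in passing, and is a welcome clarification.
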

The following theorem proves that we only need to solve $\RetLP$ instead of~$\SubLP$. Furthermore,
it provides a method how to regain a solution of $\SubLP$ from a solution of $\RetLP$.
\begin{theo}\label{regain}
Let $X$ be the feasible region of $\SubLP$. Given that $\SubLP$ has a solution, we can prove the following statements.
\begin{enumerate}[i)]
\item The LP $\RetLP$ has a solution as well.\label{regain_RetLP_has_sol}
\item Any solution $\ystar$ of $\RetLP$ induces a solution $\xstarfix:=M_r\ystar$ of the linear program $\SubLP$.
\end{enumerate}
\end{theo}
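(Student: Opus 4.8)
The plan is to leverage the machinery built up in Lemma~\ref{regain_h} and Corollary~\ref{r_iota_bijective}, which essentially say that $r$ and $\iota|_{\FixGR}$ set up a linear bijection between the feasible regions of $\SubLP$ and $\RetLP$ that carries the utility value along unchanged. First I would record the setup: let $X$ be the feasible region of $\SubLP$, which by the definition of $\SubLP$ is a subset of $\rnpl\cap\FixGR$, and let $Y:=M_{\iota}X$ be the feasible region of $\RetLP$, as established in Lemma~\ref{regain_h}\,\ref{regain_h_feasible_regionY}). Assume $\SubLP$ has a solution $\xstarfix\in X$, i.e.\ $\hat{c}^t\xstarfix=\max_{x\in X}\hat{c}^tx$.

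For part~\ref{regain_RetLP_has_sol}), I would set $\ystar:=M_{\iota}\xstarfix\in Y$ and check it is a solution of $\RetLP$: by Lemma~\ref{regain_h}\,\ref{regain_h_same_maxVal}) the maximal utility values of $\SubLP$ and $\RetLP$ coincide, so it suffices to show $\hat{c}^tM_r\ystar=\hat{c}^t\xstarfix$. But $M_r\ystar=M_rM_{\iota}\xstarfix=P\xstarfix=\xstarfix$ since $\xstarfix\in\FixGR$ and $P$ is the projection onto $\FixGR$ (here I use $M_rM_{\iota}=P$ from~(\ref{M_rM_iota_equals_P}) and $Px=x$ for $x\in\FixGR$). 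Hence $\hat{c}^tM_r\ystar=\hat{c}^t\xstarfix$, which is the common maximal value, so $\ystar$ is indeed a solution of $\RetLP$; in particular $\RetLP$ has a solution.

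For part~ii), let $\ystar$ be any solution of $\RetLP$ and put $x:=M_r\ystar$. By Lemma~\ref{regain_h}\,\ref{regain_h_feasible_Mry}), $x$ is feasible for $\SubLP$. To see it is optimal, note that by Lemma~\ref{regain_h}\,\ref{regain_h_same_maxVal}) the two problems share the same maximal utility value $z:=\max_{x\in X}\hat{c}^tx=\max_{y\in Y}\hat{c}^tM_ry$, and $\ystar$ attains it, so $\hat{c}^tx=\hat{c}^tM_r\ystar=z$. Thus $x$ is a feasible point of $\SubLP$ whose utility value equals the maximal utility value of $\SubLP$, i.e.\ $x=M_r\ystar$ is a solution of $\SubLP$. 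This is exactly the claimed $\xstarfix:=M_r\ystar$.

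There is no real obstacle here — the content was already distilled into Lemma~\ref{regain_h} and Corollary~\ref{r_iota_bijective}. The only point that needs a moment of care is the direction ``solution of $\SubLP$ $\Rightarrow$ solution of $\RetLP$'': one must not merely push $\xstarfix$ across by $M_{\iota}$, but verify that the pushed point attains the $\RetLP$-maximum, which is where the equality of maximal values from Lemma~\ref{regain_h}\,\ref{regain_h_same_maxVal}) together with the identity $M_rM_{\iota}\xstarfix=\xstarfix$ is used. Everything else is a direct substitution using $M_rM_{\iota}=P$, $PP=P$, and $M_{\iota}M_r=I_k$.
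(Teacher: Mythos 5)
Your proposal is correct and follows essentially the same route as the paper: both parts are reduced to Lemma~\ref{regain_h}, using part~ii) for feasibility of $\ystar=M_{\iota}\xstarfix$, part~i) for feasibility of $M_r\ystar$, part~iii) for the equality of optimal values, and the identities $M_rM_{\iota}=P$ and $P\xstarfix=\xstarfix$ to close the chain of utility-value equalities. No gaps.
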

\begin{proof}
The proof essentially relies on Lemma~\ref{regain_h}.
\begin{enumerate}[i)]
\item Let $\xstarfix$ be a solution of $\SubLP$. We show that $\ystar$ defined by
\[\ystar:=M_{\iota}\xstarfix\]
is a solution of $\RetLP$. By part~\ref{regain_h_feasible_regionY}) of Lemma~\ref{regain_h}, the feasible region of $\RetLP$ is given by
$Y:=M_{\iota}X$. Since $\xstarfix$ is in $X$, the point $\ystar$ is feasible for $\RetLP$. According
to~\ref{regain_h}~\ref{regain_h_same_maxVal}), we have
\begin{align*}
\max_{y\in Y} \hat{c}^tM_ry&=\max_{x\in X} \hat{c}^tx=\hat{c}^t\xstarfix=\hat{c}^tP\xstarfix=\hat{c}^t(M_rM_{\iota})\xstarfix=\\
&=\hat{c}^tM_r(M_{\iota}\xstarfix)=\hat{c}^tM_r\ystar\enspace,
\end{align*}
that is, the point $\ystar$ is a solution of $\RetLP$.
\item Let $\ystar$ be a solution of $\RetLP$. Using~\ref{regain_h}~\ref{regain_h_feasible_Mry}), the point $\xstarfix=M_r\ystar$ is feasible for
$\SubLP$. By~\ref{regain_h}~\ref{regain_h_same_maxVal}), we now get
    \begin{align*}
\max_{x\in X} \hat{c}^tx&=\max_{y\in Y} \hat{c}^tM_ry=\hat{c}^tM_r\ystar=\hat{c}^t\xstarfix\enspace.
\end{align*}
This shows that $\xstarfix=M_r\ystar$ is a solution of $\SubLP$.
\end{enumerate}
\end{proof}
Combining Corollary~\ref{solution_of_SubLP_solves_LP} and Theorem~\ref{regain_h}, we conclude that
it suffices to solve the $k$-dimensional retract~$\RetLP$, compare Remark~\ref{dim_RetLP}, in order
to obtain a solution of~$\SubLP$, which then is a solution of the original $n$-dimensional linear
program~$\Lambda$.\\

Finally, we resume Example~\ref{ex_subsAlgo} to study the effects of the final two steps of the algorithm.\\

\noindent\textbf{Example~\ref{ex_subsAlgo} (continued).} Consider the LP $\mathrm{Sub}(\Lambda_0)$
defined by
\[\hat{A}=\begin{pmatrix}
2&0&0&0\\
0&0&2&0\\
1&0&1&0\\
1&0&1&0
\end{pmatrix},\enspace\hat{c}^t=(2,0,4,0)\enspace.\]
Then the LP $\mathrm{Ret}(\mathrm{Sub}(\Lambda_0))$ is given by
\begin{align*}
\mathrm{Ret}(\mathrm{Sub}(\Lambda_0))=\left\{\begin{aligned}
&\mathrm{max}\enspace  \hat{c}^tM_ry\\
&\mathrm{s.t.}\hspace{0.35cm}  \hat{A}M_ry\leq b,\enspace y\in\rkpl\enspace
\end{aligned}\right.\enspace,
\end{align*}
where
\[\hat{c}^tM_r=(2,0,4,0)\begin{pmatrix}
1&0\\
1&0\\
0&1\\
0&1
\end{pmatrix}=(2,4)\]
and
\[\hat{A}M_r=\begin{pmatrix}
2&0&0&0\\
0&0&2&0\\
1&0&1&0\\
1&0&1&0
\end{pmatrix}\begin{pmatrix}
1&0\\
1&0\\
0&1\\
0&1
\end{pmatrix}=\begin{pmatrix}
2&0\\
0&2\\
1&1\\
1&1
\end{pmatrix}\enspace.\]
Expanding $\mathrm{Ret}(\mathrm{Sub}(\Lambda_0))$, we get
\begin{alignat*}{3}
2y_1\enspace&&&\leq\enspace&1&\\
&&2y_2\enspace&\leq\enspace&2&\\
y_1\enspace&+\enspace&y_2\enspace&\leq\enspace&3&\\
y_1\enspace&+\enspace&y_2\enspace&\leq\enspace&3&
\end{alignat*}
and
\[\hat{c}^tM_ry=2y_1+4y_2\enspace.\]
Obviously, this LP can be solved at a glance. The solution is given by
\[\ystar=(0.5,1)^t\enspace.\]
In order to get a solution of $\mathrm{Sub}(\Lambda_0)$, we multiply $\ystar$ by $M_r$. By Theorem~\ref{regain}, the point
\[\xstarfix=M_r\ystar=\begin{pmatrix}
1&0\\
1&0\\
0&1\\
0&1
\end{pmatrix}\begin{pmatrix}
0.5\\
1
\end{pmatrix}=\begin{pmatrix}
0.5\\
0.5\\
1\\
1
\end{pmatrix}\]
is a solution of $\mathrm{Sub}(\Lambda_0)$. Finally, Corollary~\ref{solution_of_SubLP_solves_LP}
guarantees that $\xstarfix$ is a solution of $\Lambda_0$ as well.\\

In the procedure we developed, we take advantage of symmetries by deriving a linear program of
smaller dimension, which still contains enough information to extract a solution of the
original~LP. The elaboration of our method revealed that the complexity of the derived linear
program solely depends on the number of orbits, not on the concrete structure of the symmetry
group. Therefore, transitivity of the symmetry group suffices to obtain the best possible
result.\\
But even the knowledge about one single symmetry of a linear program already effects a reduction of
the dimension, since every symmetry generates a symmetry group of the linear program and reduces
the number of orbits. Sometimes, the derived linear program~$\RetLP$ shows further
symmetries, even if we already considered the full symmetry group of the original problem. In that
case, we can apply the substitution algorithm iteratively.\\
Of course, it is not clear how to determine symmetries of arbitrary linear programs. But in
practice, some of the symmetries already attract attention during the construction of the linear
programs. For instance, think of the graph-coloring problem, where it is obvious that the variables
representing the colors can be exchanged. Therefore, the substitution procedure or algorithm should
be understood not so much as a part of the solving process, but as a pre-processing step in order
to produce a lower-dimensional linear program. In this respect, it would be reasonable to formulate
linear programs as symmetric as possible.\\

\bibliographystyle{amsplain}      
\bibliography{SymLPArxiv}   

%
%

\end{document}